\numberwithin{equation}{section}
\theoremstyle{plain}
\newtheorem{theorem}[equation]{Theorem}
\newtheorem{lemma}[equation]{Lemma}
\newtheorem{corollary}[equation]{Corollary}
\newtheorem{proposition}[equation]{Proposition}
\theoremstyle{definition}
\newtheorem{definition}[equation]{Definition}
\theoremstyle{remark}
\newtheorem{remark}[equation]{Remark}
\newcommand{\dv}{\operatorname{div}}
\newcommand{\supp}{\operatorname{supp}}
\newcommand{\dist}{\operatorname{dist}}
\newcommand{\re}{\mathbb{R}}
\newcommand{\rn}{\mathbb{R}^n}
\newcommand{\rk}{\mathbb{R}^k}
\newcommand{\ree}{\mathbb{R}^{n+1}}
\newcommand{\eps}{\varepsilon}
\newcommand{\vp}{\varphi}
\newcommand{\s}{\mathcal{S}}
\newcommand{\m}{\mathcal{M}}
\newcommand{\W}{\mathcal{W}}
\newcommand{\U}{\Upsilon}
\newcommand{\dsy}{\Delta^*_Y}
\newcommand{\hm}{\omega}
\newcommand{\pom}{\partial\Omega}
\DeclareMathOperator{\diam}{diam}
\DeclareMathOperator{\interior}{int}
\begin{document}

\title[BMO solvability]{BMO solvability and absolute continuity of harmonic measure}

\author{Steve Hofmann}

\address{Steve Hofmann\\
Department of Mathematics\\
University of Missouri\\
Columbia, MO 65211, USA} \email{hofmanns@missouri.edu}

\author{Phi Le}

\address{Phi Le
\\
Department of Mathematics
\\
University of Missouri
\\
Columbia, MO 65211, USA} \email{llc33@mail.missouri.edu}

\thanks{The authors were supported by NSF}
%This work has been possible thanks to the support and hospitality of the \textit{University of Chicago},  the \textit{University of Minnesota}, the \textit{University of Missouri},  \textit{Brown University},  
%and the \textit{BIRS Centre in Banff} (Canada). The authors would like to express their gratitude to these institutions.}

\date{\today}
\subjclass[2000]{42B99, 42B25, 35J25, 42B20}

\keywords{BMO,  Dirichlet problem, harmonic measure, 
divergence form elliptic equations, 
weak-$A_\infty$, Ahlfors-David Regularity, Uniform Rectifiability}

\begin{abstract} 
We show that for a uniformly elliptic divergence form operator $L$, defined in an open set
$\Omega$ with Ahlfors-David regular boundary, BMO-solvability  implies scale invariant quantitative
absolute continuity (the weak-$A_\infty$ property)
of elliptic-harmonic measure with respect to surface measure on $\pom$.  
We do not impose any connectivity hypothesis, qualitative or quantitative; in particular,
we do not assume the Harnack Chain condition, even within individual connected components of $\Omega$.
In this generality, our results are new even for the Laplacian.
Moreover, 
we obtain a converse, under the additional
assumption that $\Omega$ satisfies an interior Corkscrew condition, in the special case that $L$ is the Laplacian.
\end{abstract}

\maketitle

{\small
\tableofcontents}

\section{Introduction}\label{s1}  The connection between solvability of the Dirichlet problem
with $L^p$ data, 
and scale-invariant absolute continuity properties of harmonic measure
(specifically, that harmonic measure belongs to the Muckenhoupt weight class
$A_\infty$ with respect to surface measure on the boundary), is well documented,
see the monograph of Kenig \cite{Ke}, and the references cited there.  
Specifically, one obtains that the Dirichlet problem is solvable with data
in $L^p(\pom)$ for some $1<p<\infty$, if and only if harmonic measure $\hm$ with some fixed pole is absolutely
continuous with respect to surface measure $\sigma$ on the boundary, and the Poisson
kernel $d\hm/d\sigma$ satisfies a reverse H\"older condition with exponent $p'=p/(p-1)$.
The most general class of domains for which such results had previously been known
to hold is that of the so-called
``1-sided Chord-arc domains" (see Definition \ref{def1.ca} below).  

The connection between solvability of the Dirichlet problem and
scale invariant absolute continuity of harmonic measure was sharpened significantly
in work of Dindos, Kenig and Pipher \cite{DKP}, who showed that harmonic measure satisfies
an $A_\infty$ condition
with respect to surface measure, if and only if a natural
Carleson measure/BMO estimate
(to be described in more detail momentarily)
 holds for solutions of the Dirichlet problem with continuous data.
Their proof was nominally carried out in the setting of a Lipschitz domain,
but more generally, their arguments apply, essentially verbatim, to Chord-arc domains.
The results of \cite{DKP} were recently extended to the setting of a 1-sided Chord-arc domain
by Zihui Zhao \cite{Z}.

More precisely, consider a divergence form elliptic operator
\begin{equation}
\label{eq1.1}
L:=-\dv A(X)\nabla,
\end{equation}
defined in an open set $\Omega\subset\mathbb{R}^{n+1}$, 
where $A$ is $(n+1)\times(n+1)$, real, 
$L^\infty$, % possibly non-symmetric,
and satisfies the 
uniform ellipticity condition
\begin{equation}
\label{eq1.1*} \lambda|\xi|^{2}\leq\,\langle A(X)\xi,\xi\rangle
:= \sum_{i,j=1}^{n+1}A_{ij}(X)\xi_{j} \xi_{i}, \quad
  \Vert A\Vert_{L^{\infty}(\mathbb{R}^{n})}\leq\lambda^{-1},
\end{equation}
 for some $\lambda>0$, and for all $\xi\in\ree$, $X\in \Omega$.  
%\steve{\Bl Note to myself:  maybe comment about VMO-solvability vs BMO-solvability?
%are they equivalent?  Yes, if $\hm$ in weak-$A_\infty$ implies *true* BMO-solvability.}

%\steve{\Bl Add reference to Tatiana's student}
 
Given an open set $\Omega\subset\mathbb{R}^{n+1}$ whose boundary is everywhere regular
in the sense of Weiner,  and a divergence form operator $L$ as above, 
we shall say that the Dirichlet problem is {\it BMO-solvable}\footnote{It might be more 
accurate to refer to this property as ``VMO-solvability", but
BMO-solvability seems to be the established terminology in the literature.
Under less austere circumstances, e.g., in a Lipschitz or (more generally) a Chord-arc domain,
or even in the setting of our Theorem \ref{t2}, where we impose an interior Corkscrew condition,
it can be seen that the two notions are ultimately equivalent (see \cite{DKP} for a discussion of this point),
but in the more general setting of our Theorem \ref{tmain} this matter is not settled.} 
for $L$ in $\Omega$ if 
for all continuous $f$ with compact support on $\pom$, the solution $u$ of the classical 
Dirichlet problem with data $f$ satisfies the Carleson measure estimate
\begin{equation}\label{eq1.2}
\sup_{x\in \pom, \,0<r<r_0} \, \frac{1}{\sigma\big(\Delta(x,r)\big)}\iint_{\Omega\cap B(x,r)} |\nabla u(Y)|^2\, 
\delta(Y)\, dY \leq C \|f\|^2_{BMO(\pom)}\,.
\end{equation}
Here, $r_0:= 10 \diam(\pom)$,
$\sigma$ is surface measure on $\pom$, 
$\delta(Y):=\dist(Y,\pom)$, 
and as usual $B(x,r)$ and $\Delta(x,r):= B(x,r)\cap\pom$ denote, respectively, 
the Euclidean ball in $\ree$, and the surface ball on $\pom$, with center $x$ and radius $r$.

For $X\in \Omega$,
 we let
 $\hm_L^X$  denote elliptic-harmonic measure for $L$ with pole at $X$, and if the dependence on
 $L$ is clear in context, we shall simply write $\hm^X$.

The main result of this paper is the following.  All terminology used in the statement of the theorem and not 
discussed
already, will be defined precisely in the sequel.

\begin{theorem}\label{tmain}  Suppose that $\Omega\subset \ree, n\geq 2$, is an open set, not necessarily
connected, with Ahlfors-David Regular boundary.  Let $L$ be a divergence form elliptic operator defined on
$\Omega$. If the Dirichlet problem for $L$ is BMO-solvable in $\Omega$, then harmonic measure belongs
to weak-$A_\infty$ in the following sense:
for every ball $B=B(x,r)$, with $x\in \pom$, and $0<r<\diam(\pom)$, and for all $Y\in \Omega\setminus 4B$,
harmonic measure $\hm_L^Y\in$ weak-$A_\infty(\Delta)$, where $\Delta := B\cap\pom$, and 
where the parameters in the weak-$A_\infty$ condition are uniform in $\Delta$, 
and in $Y\in \Omega\setminus4B$.  
\end{theorem}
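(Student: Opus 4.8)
The strategy is to show that the Carleson measure estimate \eqref{eq1.2} forces a quantitative absolute continuity property of $\hm_L^Y$ on each surface ball, without any connectivity. The natural mechanism is a good-$\lambda$/"$\epsilon$-approximability"-type argument: BMO-solvability says precisely that bounded solutions of the Dirichlet problem generate Carleson measures with the sharp norm control, and this is the hypothesis that in the classical (connected) theory of Dindos--Kenig--Pipher and its $1$-sided chord-arc extension by Zhao is equivalent to the $A_\infty$ property. Here the output must be only \emph{weak}-$A_\infty$, which is the correct substitute when no Harnack chain condition is available, since then harmonic measure can ``disconnect'' and one cannot expect a full reverse H\"older inequality, only the one-sided comparison that defines weak-$A_\infty$. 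So the first step is to fix $\Delta = B\cap\pom$ and a pole $Y\in\Omega\setminus 4B$, and reduce, by the usual pigeonholing, to proving the quantitative statement: there exist $\eta,\tau\in(0,1)$ depending only on the allowable parameters (ellipticity, ADR constants, and the constant $C$ in \eqref{eq1.2}) such that for every surface ball $\Delta'\subset\Delta$ and every Borel $E\subset\Delta'$, $\sigma(E)\le\eta\,\sigma(\Delta')$ implies $\hm_L^Y(E)\le\tau\,\hm_L^{Y}(\Delta')$ — equivalently, a lower bound $\hm_L^Y(F)\gtrsim (\sigma(F)/\sigma(\Delta'))^\theta\, \hm_L^Y(\Delta')$ on complementary sets $F=\Delta'\setminus E$.

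The heart of the matter is then to test \eqref{eq1.2} against a cleverly chosen continuous datum $f$. I would take $f$ to be (a smooth truncation/approximation of) the indicator of a surface ball, or more precisely build $f$ adapted to the set $E$ we want to control: $f\approx \mathbf{1}_{\text{(neighborhood of }E)}$ on $\pom$, normalized so that $\|f\|_{BMO(\pom)}$ is controlled by a small power of $\sigma(E)/\sigma(\Delta')$ (this is where the BMO norm, rather than the $L^\infty$ norm, of a truncated indicator is crucially used — it is a logarithm, or after the right truncation a small constant). The corresponding solution $u$ then has boundary data essentially $\mathbf{1}_E$, so $u$ is comparable to $\hm_L^{(\cdot)}(E)$ in $\Omega$. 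One then runs the standard integration-by-parts / Caccioppoli argument: on one hand $\iint |\nabla u|^2\delta\,dY$ over a Whitney region is controlled by \eqref{eq1.2} times $\|f\|_{BMO}^2\,\sigma(\Delta')$, hence is \emph{small}; on the other hand, a lower bound for the energy of $u$ in a suitable sawtooth region over the ``bad'' part of $\Delta'$ is produced from the fact that $u$ must transition from values close to $1$ (near $E$) to values bounded away from $1$, using the ADR property and interior estimates (Caccioppoli, Harnack on Whitney balls — note Harnack on \emph{balls}, not chains, is always available) to convert oscillation of $u$ into a lower bound for $\iint|\nabla u|^2\delta$. Comparing the two bounds yields that $\sigma(E)/\sigma(\Delta')$ cannot be too small unless $\hm_L^Y(E)/\hm_L^Y(\Delta')$ is correspondingly small, which is exactly weak-$A_\infty$. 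The pole $Y\in\Omega\setminus 4B$ enters only through the change-of-pole principle / maximum principle comparing $\hm_L^Y$ on $\Delta$ with $\hm_L^{X}$ for $X$ a corkscrew-type point relative to $\Delta$ — but here there is a subtlety, addressed next.

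The \textbf{main obstacle} is the complete absence of connectivity: there is no Harnack chain condition, not even inside components, and $\Omega$ need not satisfy an interior corkscrew condition. This breaks the usual machinery in several places. First, one cannot freely pass between poles, so the uniformity in $Y\in\Omega\setminus 4B$ must be extracted by a more robust comparison — presumably via the maximum principle and the fact that the Carleson estimate \eqref{eq1.2} is stated for the \emph{classical} Dirichlet solution, combined with a Bourgain-type estimate giving non-degeneracy of harmonic measure on surface balls that holds under ADR alone. Second, the lower energy bound for $u$ in the sawtooth region normally exploits a corkscrew point where $u$ is of order $1$; without corkscrews one must instead argue on the set $\{Y\in\Omega: u(Y)>1/2\}$ directly and show, using ADR and a covering by Whitney cubes, that its intersection with a sawtooth over the relevant part of the boundary carries enough mass for the Caccioppoli lower bound to bite — i.e., replace ``the corkscrew point'' by ``the super-level set of $u$,'' which is genuinely where BMO-solvability (a statement about \emph{all} Whitney regions, not just those along a chain) does the work. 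This is the technical crux, and I expect it to require the careful Whitney/dyadic decomposition of $\Omega$ and of $\pom$, together with a stopping-time argument on the dyadic cubes of $\pom$, to localize the estimate and patch the pieces together.
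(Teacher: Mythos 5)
Your overall philosophy is right --- test the Carleson estimate \eqref{eq1.2} against a BMO-normalized approximate indicator and extract quantitative absolute continuity --- but the mechanism you propose differs from the paper's, and the hardest step (handling the absence of Harnack chains) is left at the level of a hope rather than an argument. Here are the concrete gaps.

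First, the paper does not attempt to verify the weak-$A_\infty$ condition directly on every sub-ball $\Delta'\subset\Delta$ and every pole $Y\in\Omega\setminus 4B$. Instead it invokes a real-variable reduction lemma extracted from Bennewitz--Lewis (Lemma~\ref{BLlemma} in the text): it suffices to show that there exist fixed $\eta,c_0\in(0,1)$ so that for every \emph{interior} point $X$ with $\delta(X)<\diam(\pom)$ and every $F\subset\Delta_X$ (the surface ball of radius $10\delta(X)$ centered at a touching point $\hat x$), $\sigma(F)\ge(1-\eta)\sigma(\Delta_X)$ forces $\hm^X(F)\ge c_0$. The uniformity over all poles and all sub-balls is then handled by the abstract lemma; one never has to change poles or iterate on sub-balls by hand. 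Without this reduction you are stuck proving a much stronger-looking scale-by-scale statement, and passing between poles in a disconnected domain is exactly the kind of step you cannot do.

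Second, your proposed lower bound on the energy of $u$ in a sawtooth over the "bad set" is not how the paper closes the argument, and I do not see how to make it work. To get a lower energy bound you would need to know that $u$ takes values near $1$ somewhere and values bounded away from $1$ nearby; without Harnack chains, a whole component of $\Omega$ might carry $u\equiv$ const, and your level-set idea does not rule that out. The paper instead proves an \emph{upper bound} for $u(X)$ itself (Claim~2): integrating $\partial_t u(t\xi)$ along rays in a fixed cone $\Gamma$ with vertex at the touching point $\hat x$, one gets
\begin{equation*}
u(X)\ \le\ C_\eps\left(\sigma(\Delta_X)^{-1}\iint_{B_X\cap\Omega}|\nabla u|^2\,\delta\,dY\right)^{1/2}+C\eps^\alpha,
\end{equation*}
using Harnack on the spherical cap $S^n\cap\Gamma$ (only Harnack on fixed balls, no chains), polar coordinates, Cauchy--Schwarz, and the boundary H\"older decay of Corollary~\ref{cor2.4} for the $\eps^\alpha$ error. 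The BMO/Carleson hypothesis then makes $u(X)$ small, which is the opposite direction from a lower bound.

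Third --- and this is the technical crux you flag but do not resolve --- the paper's Claim~1 supplies the substitute for a corkscrew/Harnack-chain argument. One exhibits a surface ball $\Delta_1=B(x_1,ar)\cap\pom$, at definite distance from the touching point $\hat x$, with $\hm_L^X(\Delta_1)\gtrsim\hm_L^X(\Delta_X)$. This is done by an explicit two-case geometric construction with inverted cones $\Gamma\supset\Gamma'\supset\Gamma''$ of vertex $X$: either $\pom$ meets the annulus $\overline{5B_0}\setminus B_0$ outside $\Gamma$, or it is trapped inside $\Gamma$, and in each case one writes down a subdomain $\Omega'\subset\Omega$ (built from $B(X,\rho)\setminus\overline\Gamma$, or $B(X,4r)\setminus\overline{\Gamma_{\theta_0}}$, unioned with the touching ball $B_0$ and minus a small ball at $\hat x$) which a priori misses $\pom$, and through which one can run a Harnack chain from $X$ to a point near $x_1$. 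This gives the non-degeneracy $\hm^X(\Delta_1)\ge\beta>0$ with $\beta$ depending only on allowable constants, and the separation of $B_1$ from the small ball $B'_X=B(\hat x,ar)$ is what lets the solution $u$ (with data supported near $A_1\subset\Delta_1$) vanish continuously near $\hat x$, so that Claim~2 applies. You will not be able to do without some version of this construction; Bourgain's lemma alone gives non-degeneracy of $\hm^X(\Delta_X)$ but says nothing about a sub-ball away from $\hat x$, which is what the argument needs.

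Finally, the paper still uses essentially your logarithmic-maximal-function trick: with $A_1=\Delta_1\setminus F$ one sets $f=\max\bigl(0,\,1+\gamma\log\m(1_{A_1})\bigr)$, which dominates $1_{A_1}$, has $\|f\|_{BMO}\lesssim\gamma$, and is supported in $2B_1$ once $\eta$ is small. One then mollifies (Lemma~\ref{l3.9}) to get continuous data, applies Claim~2 and \eqref{eq1.2} to bound $\hm^X(A_1)\le(C_\eps\gamma+C\eps^\alpha)\hm^X(\Delta_X)$, combines with Claim~1 to bound $\hm^X(\Delta_X\setminus F)$, and chooses $\eps$ then $\gamma$ small to conclude. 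So your intuition about the role of the BMO norm of a truncated indicator is accurate; what is missing is the reduction lemma, the geometric Claim~1, and the correct (upper-bound) form of the energy estimate.
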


 As mentioned above, this result was 
 established in \cite{DKP}, and in \cite{Z},  under the more restrictive assumption that
 $\Omega$ is Chord-arc, or 1-sided Chord-arc, respectively.  The arguments of \cite{DKP} and \cite{Z}
 rely both explictly and implicitly on quantitative connectivity of the domain, more precisely,
on  the Harnack Chain condition (see Definition \ref{def1.hc} below).
The new contribution of the present paper is to dispense with all connectivity assumptions,
both qualitative and quantitative.
In particular, we do not assume the Harnack Chain condition, even within
individual connected components of $\Omega$.  
 In this generality, our results are new even
for the Laplacian.   

We observe that we draw a slightly weaker conclusion than that of \cite{DKP}
(or \cite{Z}), namely, weak-$A_\infty$, as opposed to $A_\infty$, but this is the best that can be hoped for
in the absence of connectivity:  indeed, clearly, the doubling property of harmonic measure
may fail without connectivity.  Moreover, even in a connected domain 
enjoying an
``interior big pieces of Lipschitz domains" condition, and having an ADR boundary
(and thus, for which harmonic measure belongs to weak-$A_\infty$, by the main result of \cite{BL}), 
 the doubling property may fail
in the absence of Harnack Chains;  see \cite[Section 4]{BL} for a counter-example.

In the particular case that $L$ is the Laplacian, we also obtain the following. 

\begin{corollary}\label{cor1.5}
Let $\Omega\subset \ree, n\geq 2$, be an open set, not necessarily
connected, with Ahlfors-David Regular boundary, and in addition, suppose that 
$\Omega$ satisfies an interior Corkscrew condition (Definition \ref{def1.cork}),
and that the Dirichlet problem is BMO-solvable for Laplace's equation in $\Omega$.
Then $\pom$  is uniformly rectifiable (Definition \ref{defur}).
\end{corollary}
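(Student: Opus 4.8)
First I would apply Theorem~\ref{tmain} with $L=-\Delta$: since the Dirichlet problem for the Laplacian is BMO-solvable in $\Omega$, the theorem gives that harmonic measure $\hm^Y$ belongs to weak-$A_\infty(\Delta)$ for every surface ball $\Delta=B\cap\pom$, with $B=B(x,r)$, $x\in\pom$, $0<r<\diam(\pom)$, and for every pole $Y\in\Omega\setminus 4B$, with weak-$A_\infty$ parameters uniform in $\Delta$ and $Y$. This reduces Corollary~\ref{cor1.5} to the following implication, in which BMO no longer appears: \emph{if $\Omega\subset\ree$ is open with ADR boundary, satisfies the interior Corkscrew condition, and harmonic measure for the Laplacian lies in weak-$A_\infty$ in the above scale-invariant sense, then $\pom$ is uniformly rectifiable.}

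To prove this implication I would invoke the ``free'' (converse) direction of the now well-developed theory relating quantitative absolute continuity of harmonic measure to uniform rectifiability. Here the interior Corkscrew condition is used for the first time: together with ADR it yields a corkscrew point $X_\Delta$ relative to $\Delta$ and Bourgain's lower bound $\hm^{X_\Delta}(\Delta)\gtrsim 1$; combined with the trivial bound on total mass and with weak-$A_\infty$, this promotes the weak-$A_\infty$ hypothesis to a genuine scale-invariant weak reverse H\"older estimate for the Poisson kernel $k^Y:=d\hm^Y/d\sigma$ (``Poisson kernels in $L^q_{\loc}$'' in a uniform sense). One then quotes the theorem that, for an open set with ADR boundary satisfying an interior Corkscrew condition, such a bound on the Poisson kernel---equivalently, weak-$A_\infty$ of harmonic measure---forces $\pom$ to be uniformly rectifiable; this is essentially the content of the Hofmann--Martell and Hofmann--Martell--Mayboroda work on uniform rectifiability and harmonic measure, together with the Azzam--Hofmann--Martell--Mourgoglou--Tolsa circle of results on rectifiability of harmonic measure.

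The one point that needs care is the absence of connectivity. Those results are most cleanly stated, and in several cases were originally proved, for connected domains---typically $1$-sided NTA domains, where the Harnack Chain condition also holds---whereas here $\Omega$ need not be connected and no Harnack Chains are assumed (this is precisely why weak-$A_\infty$, rather than $A_\infty$, is the right conclusion). So the main obstacle is to check that the machinery driving the ``Poisson kernel $\Rightarrow$ UR'' implication survives at this level of generality. That machinery---a bilateral corona/stopping-time decomposition of $\pom$ followed by blow-up and tangent-measure arguments, or, in an alternative packaging, $\eps$-approximability of bounded harmonic functions together with the attendant Carleson-measure estimates---is purely local: at each location and scale it uses only harmonic measure emanating from a single corkscrew point and the behavior of solutions in a single Whitney-type region, never chaining estimates across distinct components or between far-apart balls. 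Hence it goes through verbatim once one feeds in the uniform (in $\Delta$ and $Y$) weak-$A_\infty$ bound supplied by Theorem~\ref{tmain}. In the final write-up I would either cite a version of these UR theorems already stated for general open sets with ADR boundary and interior Corkscrews, or indicate the minor changes needed to remove the connectivity hypothesis from the existing proofs.
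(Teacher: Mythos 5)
Your proposal is correct and follows the same two-step route as the paper: apply Theorem~\ref{tmain} to deduce weak-$A_\infty$ of harmonic measure, then invoke the ``weak-$A_\infty$ (or Poisson kernel in $L^q$) plus ADR plus interior Corkscrew implies UR'' theorem. One small simplification: the worry in your final paragraph about removing connectivity from the cited results is unnecessary, since the references the paper actually uses --- \cite{HM-IV}, and the more general versions in \cite{HLMN} and \cite{MT} --- are already stated and proved for open sets with ADR boundary satisfying only the interior Corkscrew condition, with no Harnack Chain or connectivity hypothesis, so they may be quoted verbatim.
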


The proof of the corollary is almost immediate:  by Theorem \ref{tmain}, harmonic measure
belongs to weak-$A_\infty$ (even without the Corkscrew condition), 
so by the result of \cite{HM-IV}\footnote{See also \cite{HLMN} and \cite{MT} for more general versions
of the result of \cite{HM-IV}.}, %in ambient dimension $n+1\geq 3$, and 
in the presence of the interior Corkscrew condition, 
$\pom$ is uniformly rectifiable.

We remark that the Corkscrew hypothesis is fairly mild, in the sense that
if $\Omega=\ree\setminus E$ is the complement of an ADR set, then the 
Corkscrew condition holds automatically, by a simple pigeon-holing argument.
We also remark that in the absence of the Corkscrew condition, the result of \cite{HM-IV} may fail;
a counter-example will appear in forthcoming work of the first author and J. M. Martell.

We also obtain a partial converse to Theorem \ref{tmain}.

\begin{theorem}\label{t2}  Let  $\Omega\subset \ree, n\geq 2$ be an open set, not necessarily
connected, with Ahlfors-David Regular (ADR) boundary.  
Let $L$ be a divergence form elliptic operator defined on
$\Omega$, and suppose that elliptic-harmonic measure for $L$ belongs to weak-$A_\infty$ in the sense of
the conclusion of Theorem \ref{tmain}.  Then the Dirichlet problem for $L$
is $L^p$-solvable\footnote{We shall say more precisely what this means, in the sequel; see the 
statement of Proposition \ref{prop4.2}.} in $\Omega$,
for $p<\infty$ sufficiently large.   In the special case that $L$ is the Laplacian,  the Dirichlet problem
  is BMO-solvable, provided also that $\Omega$ satisfies an interior Corkscrew condition.
\end{theorem}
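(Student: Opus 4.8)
\emph{Proof strategy.} The plan is to prove the two assertions separately.

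\emph{The $L^p$-solvability (general $L$).} The point is that weak-$A_\infty$ self-improves. By a Gehring-type argument, the weak-$A_\infty$ property of $\hm=\hm_L^Y$ on a surface ball $\Delta$ forces $\hm\ll\sigma$ on a fixed dilate of $\Delta$, and the resulting Poisson kernel $k:=d\hm/d\sigma$ obeys a \emph{weak} reverse H\"older inequality $\big(\fint_{\Delta'}k^q\,d\sigma\big)^{1/q}\lesssim\fint_{C\Delta'}k\,d\sigma$ for all surface balls $\Delta'\subset\Delta$, for some $q>1$ and $C\geq1$ depending only on the ADR and weak-$A_\infty$ constants. One then proves the $L^p$-Dirichlet estimate, in the precise sense of Proposition \ref{prop4.2}, for $p:=q'$: given $f\in C_c(\pom)$ and $u(Y)=\int_{\pom}f\,d\hm^Y$, one dominates the ($L^2$-averaged) nontangential maximal function of $u$ by a dyadic Hardy--Littlewood-type maximal operator of $f$ with respect to harmonic measure --- using only interior estimates and the comparability of $\hm^Y$ for poles $Y$ lying in a common Whitney region, neither of which needs any connectivity --- and transfers the resulting $L^{q'}(\hm)$ bound to $L^{q'}(\sigma)$ by the weak reverse H\"older inequality. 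Since this bound comes from a (weak) reverse H\"older inequality, it also yields $L^p$-solvability for all $p$ larger than some $p_0$, and one may arrange $p_0\geq2$. This step is routine, and is carried out as Proposition \ref{prop4.2}.

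\emph{BMO-solvability (Laplacian, interior Corkscrew).} Now $L=-\Delta$, and we add the interior Corkscrew hypothesis. Step 1: since $\pom$ is ADR, $\Omega$ has interior Corkscrews, and $\hm$ is weak-$A_\infty$, the connectivity-free free-boundary results of \cite{HM-IV} (in their sharpest forms, \cite{HLMN} and \cite{MT}) imply that $\pom$ is uniformly rectifiable. Step 2: with UR and interior Corkscrews in hand, the solution-side machinery is available in this generality --- in particular, for harmonic functions, the Carleson measure estimate $\iint_{B(x,r)\cap\Omega}|\nabla v|^2\delta\,dY\lesssim\|v\|^2_{L^\infty}\,\sigma(\Delta(x,r))$, a localized comparison $\int_\Delta(Sv)^2\,d\sigma\lesssim\int_{C\Delta}(Nv)^2\,d\sigma+(\mathrm{error})$ of the ($L^2$-averaged) conical square function with the ($L^2$-averaged) nontangential maximal function (with controllable error), and $\eps$-approximability --- together with the Fubini bound $\iint_{B(x_0,\rho)\cap\Omega}|\nabla v|^2\delta\,dY\lesssim\int_{C\Delta_0}(Sv)^2\,d\sigma$, which uses ADR. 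Step 3: run the Dindos--Kenig--Pipher \cite{DKP} local/global splitting. Fix $f\in C_c(\pom)$ and a surface ball $\Delta_0=\Delta(x_0,\rho)$ with $\rho<r_0$; put $c:=\fint_{2\Delta_0}f\,d\sigma$ and split $u=u_{\mathrm{loc}}+u_{\mathrm{glob}}$ on $\Omega$, with boundary data $(f-c)\mathbf{1}_{2\Delta_0}$ and $c+(f-c)\mathbf{1}_{\pom\setminus2\Delta_0}$ respectively (both harmonic-measure integrals converge absolutely, so $\nabla u=\nabla u_{\mathrm{loc}}+\nabla u_{\mathrm{glob}}$ pointwise). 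For the local term, John--Nirenberg gives $\|(f-c)\mathbf{1}_{2\Delta_0}\|_{L^p(\sigma)}\lesssim\sigma(\Delta_0)^{1/p}\|f\|_{BMO}$ for every $p<\infty$; taking $p=\max(p_0,2)\geq2$ and combining the $L^p$-solvability from the first part, the square-function/nontangential-maximal-function comparison of Step 2, H\"older's inequality (valid since $p\geq2$), and the ADR Fubini bound yields $\iint_{B(x_0,\rho)\cap\Omega}|\nabla u_{\mathrm{loc}}|^2\delta\,dY\lesssim\sigma(\Delta_0)\|f\|^2_{BMO}$. For the global term, $u_{\mathrm{glob}}-c$ is harmonic with boundary data $(f-c)\mathbf{1}_{\pom\setminus2\Delta_0}$ vanishing on $2\Delta_0$ (here one uses that a codimension-one ADR boundary satisfies the capacity density condition, so that constants are reproduced and harmonic functions are H\"older continuous up to $\pom$); decomposing $f-c$ into dyadic annuli about $x_0$ --- whose $\sigma$-averages grow only logarithmically, by BMO --- and combining the boundary H\"older estimate, the decay of harmonic measure of these annuli, and the weak reverse H\"older inequality (to pass from $\hm^Y$-averages of $|f-c|$ to $\sigma$-averages) with the interior gradient estimate, one obtains $\delta(Y)|\nabla u_{\mathrm{glob}}(Y)|\lesssim(\delta(Y)/\rho)^{\alpha}\|f\|_{BMO}$ for $Y\in B(x_0,\rho)\cap\Omega$ and some $\alpha>0$; summing $|\nabla u_{\mathrm{glob}}|^2\delta$ over the Whitney cubes meeting $B(x_0,\rho)$ and using ADR gives again $\lesssim\sigma(\Delta_0)\|f\|^2_{BMO}$. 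Adding the two contributions and dividing by $\sigma(\Delta_0)$ yields \eqref{eq1.2}.

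\emph{Main obstacle.} The genuinely new point --- the reason one cannot simply cite \cite{DKP} or \cite{Z} --- is that all of this must be done without any connectivity hypothesis: the $L^2$-averaged nontangential maximal function and square function must be used throughout, and one needs, in the merely ADR-plus-interior-Corkscrew setting, both the implication weak-$A_\infty\Rightarrow$ UR and the solution-side Carleson and ``$S\lesssim N$'' estimates to be available (respectively from \cite{HM-IV}, \cite{HLMN}, \cite{MT}, and from the connectivity-free refinements of the Hofmann--Martell--Mayboroda estimates). Within Step 3, the most delicate bookkeeping is the local term: the ``$S\lesssim N$'' comparison comes with error terms supported on a dilate of $B(x_0,\rho)$, and one must check those errors are themselves $\lesssim\sigma(\Delta_0)^{1/2}\|f\|_{BMO}$, which again uses the $L^p$-solvability of the first part on a slightly enlarged ball; the global term, while classical in spirit, also requires a careful quantitative use of the weak reverse H\"older inequality and of the capacity density consequences of ADR. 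The first assertion itself presents no real obstacle.
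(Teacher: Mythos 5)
Your proposal is correct and follows essentially the same route as the paper: for $L^p$-solvability, a pointwise domination of the nontangential maximal function by a Hardy--Littlewood maximal function of $|g|^p$, obtained by an annular decomposition of the data and the weak reverse H\"older estimate (Proposition~\ref{prop4.2}); and for BMO-solvability, weak-$A_\infty$ plus Corkscrew yields UR via \cite{HM-IV}, \cite{HLMN}, \cite{MT}, whence the $S\lesssim N$ comparison of \cite{HMM2}, and a DKP-style local/global splitting finishes. The only (cosmetic) differences are that the paper works with the ordinary pointwise $N_*$ rather than an $L^2$-averaged one, decomposes $u$ directly into infinitely many annular pieces $u_k$ rather than a binary local/global split, and invokes the global $L^p$ version of the $S\lesssim N$ bound so that no error terms need to be tracked.
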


As noted above, our main new 
contribution is Theorem \ref{tmain}, which establishes the direction
BMO-solvability implies $\hm \in$ weak-$A_\infty$;  it is in
that direction that the lack of connectivity is most problematic.
By contrast, our proof  of the opposite implication (i.e., Theorem \ref{t2}) 
is a fairly routine adaptation of the corresponding
arguments of \cite{DKP} and of \cite{FN}.  On the other hand, let us point out that 
in Theorem \ref{t2}, we have imposed an extra assumption, namely the Corkscrew condition.
At present, we do not know whether the latter hypothesis is necessary to obtain the conclusion of Theorem
\ref{t2} (although as remarked above, in its absence uniform rectifiability of $\pom$ may fail), 
nor do we know whether the conclusion of BMO solvability extends to the case of a
general divergence form elliptic operator $L$.

To provide some further context for our results here, let us mention that recently,
Kenig, Kirchheim, Pipher and Toro have shown in \cite{KKiPT} that for a Lipschitz domain 
$\Omega$,  a weaker Carleson measure estimate,
namely, a version of \eqref{eq1.2} in which the BMO norm of the boundary data is replaced by
$\|u\|_{L^\infty(\Omega)}$, still suffices to establish that $\hm_L$ satisfies an $A_\infty$ condition
with respect to surface measure on $\pom$.  Moreover, the argument of \cite{KKiPT} carries over with
minor changes to the more general setting of a uniform (i.e., 1-sided NTA) domain with
Ahlfors-David regular boundary \cite{HMT}.   However, in contrast to our Theorem \ref{tmain}, to deduce
absolute continuity of harmonic measure under the weaker $L^\infty$ Carleson measure condition seems 
necessarily to
require some sort of connectivity (such as the Harnack Chain condition enjoyed by uniform domains).
Indeed, specializing to the case that $L$ is the Laplacian,
an example of Bishop and Jones \cite{BiJo} shows that  harmonic measure $\hm$ need {\it not}
be absolutely continuous with respect to surface measure, even for domains with uniformly rectifiable boundaries,
whereas the first named author of this paper, along with J. M. Martell and S. Mayboroda,
have shown in \cite{HMM} that uniform rectifiability of $\pom$ alone suffices to deduce the $L^\infty$
version of \eqref{eq1.2} in the harmonic case (and indeed, for solutions of certain other
elliptic equations as well).

The paper is organized as follows.  In the remainder of this section, 
we present some basic notations and definitions.  In Section \ref{s2}, we recall some
known results from the theory of elliptic PDE.  In Sections \ref{s4} and \ref{s3}, we give the proofs of
Theorems \ref{tmain} and \ref{t2}, respectively.

\subsection{Notation and Definitions}\label{ss1.1}

\begin{list}{$\bullet$}{\leftmargin=0.4cm  \itemsep=0.2cm}

\item We use the letters $c,C$ to denote harmless positive constants, not necessarily
the same at each occurrence, which depend only on dimension and the
constants appearing in the hypotheses of the theorems (which we refer to as the
``allowable parameters'').  We shall also
sometimes write $a\lesssim b$ and $a \approx b$ to mean, respectively,
that $a \leq C b$ and $0< c \leq a/b\leq C$, where the constants $c$ and $C$ are as above, unless
explicitly noted to the contrary.  %At times, we shall designate by $M$ a particular 
%constant whose value will remain unchanged throughout the proof of a given lemma or 
%proposition, but which may have a different value during the proof of a different lemma or proposition.

\item Given a closed set $E \subset \ree$, we shall
use lower case letters $x,y,z$, etc., to denote points on $E$, and capital letters
$X,Y,Z$, etc., to denote generic points in $\ree$ (especially those in $\ree\setminus E$).

\item The open $(n+1)$-dimensional Euclidean ball of radius $r$ will be denoted
$B(x,r)$ when the center $x$ lies on $E$, or $B(X,r)$ when the center
$X \in \ree\setminus E$.  A ``surface ball'' is denoted
$\Delta(x,r):= B(x,r) \cap\partial\Omega.$

\item Given a Euclidean ball $B$ or surface ball $\Delta$, its radius will be denoted
$r_B$ or $r_\Delta$, respectively.

\item Given a Euclidean or surface ball $B= B(X,r)$ or $\Delta = \Delta(x,r)$, its concentric
dilate by a factor of $\kappa >0$ will be denoted
$\kappa B := B(X,\kappa r)$ or $\kappa \Delta := \Delta(x,\kappa r).$

\item Given a (fixed) closed set $E \subset \ree$, for $X \in \ree$, we set $\delta(X):= \dist(X,E)$.

\item We let $H^n$ denote $n$-dimensional Hausdorff measure, and let
$\sigma := H^n\lfloor_{E}$ denote the ``surface measure'' on a closed set $E$
of co-dimension 1.

\item For a Borel set $A\subset \ree$, we let $1_A$ denote the usual
indicator function of $A$, i.e. $1_A(x) = 1$ if $x\in A$, and $1_A(x)= 0$ if $x\notin A$.

\item For a Borel set $A\subset \ree$,  we let $\interior(A)$ denote the interior of $A$.
%If $A\subset E\subset \ree$,
%then unless otherwise specified, $\interior(A)$ will denote the relative interior, i.e., the largest relatively open set in
%$E$ contained in $A$.  Thus, for $A\subset E$,
%the boundary is then well defined by $\partial A := \overline{A} \setminus {\rm int}(A)$.

%\item For a Borel set $A$, we denote by $\mathcal{C}(A)$ the space of continuous functions on
%$A$, by $\mathcal{C}_c(A)$ the subspace of $\mathcal{C}(A)$
%with compact support in $A$, and by $\mathcal{C}_b(A)$ the
%space of bounded continuous functions on $A$.  If $A$ is unbounded, we denote by
%$C_0(A)$ the space of continuous functions on $A$ converging to $0$ at infinity.

\item Given a Borel measure $\mu$, and a Borel set $A$, with positive and finite $\mu$ measure, we
set $\fint_A f d\mu := \mu(A)^{-1} \int_A f d\mu$.

\item We shall use the letter $I$ (and sometimes $J$)
to denote a closed $(n+1)$-dimensional Euclidean dyadic cube with sides
parallel to the co-ordinate axes, and we let $\ell(I)$ denote the side length of $I$.
If $\ell(I) =2^{-k}$, then we set $k_I:= k$.
%Given an ADR set $E\subset \ree$, we use $Q$ to denote a dyadic ``cube''
%on $E$.  The latter exist (cf. \cite{DS1}, \cite{Ch}), and enjoy certain properties
%which we enumerate in Lemma \ref{lemmaCh} below.

%\steve{\Bl Note to myself:  do we need cubes?}

\end{list}

\begin{definition}\label{defadr} ({\bf  ADR})  (aka {\it Ahlfors-David regular}).
We say that a  set $E \subset \ree$, of Hausdorff dimension $n$, is ADR
if it is closed, and if there is some uniform constant $C$ such that
\begin{equation} \label{eq1.ADR}
\frac1C\, r^n \leq \sigma\big(\Delta(x,r)\big)
\leq C\, r^n,\quad\forall r\in(0,\diam (E)),\ x \in E,
\end{equation}
where $\diam(E)$ may be infinite.
%Here, $\Delta(x,r):= E\cap B(x,r)$ is the ``surface ball" of radius $r$,
%and $\sigma:= H^n|_E$ %\footnote{}
%is the ``surface measure" on $E$, where $H^n$ denotes $n$-dimensional
%Hausdorff measure.
\end{definition}

\begin{definition}\label{defur} ({\bf UR}) (aka {\it uniformly rectifiable}).
An $n$-dimensional ADR (hence closed) set $E\subset \ree$
is UR if and only if it contains ``Big Pieces of
Lipschitz Images" of $\rn$ (``BPLI").   This means that there are positive constants $\theta$ and
$M_0$, such that for each
$x\in E$ and each $r\in (0,\diam (E))$, there is a
Lipschitz mapping $\rho= \rho_{x,r}: \rn\to \ree$, with Lipschitz constant
no larger than $M_0$,
such that 
$$
H^n\Big(E\cap B(x,r)\cap  \rho\left(\{z\in\rn:|z|<r\}\right)\Big)\,\geq\,\theta\, r^n\,.
$$
\end{definition}

We recall that $n$-dimensional rectifiable sets are characterized by the
property that they can be
covered, up to a set of
$H^n$ measure 0, by a countable union of Lipschitz images of $\rn$;
we observe that BPLI  is a quantitative version
of this fact.

We remark
that, at least among the class of ADR sets, the UR sets
are precisely those for which all ``sufficiently nice" singular integrals
are $L^2$-bounded  \cite{DS1}.    In fact, for $n$-dimensional ADR sets
in $\ree$, the $L^2$ boundedness of certain special singular integral operators
(the ``Riesz Transforms"), suffices to characterize uniform rectifiability (see \cite{MMV} for the case $n=1$, and 
\cite{NToV} in general). 
We further remark that
there exist sets that are ADR (and that even form the boundary of a domain satisfying 
interior Corkscrew and Harnack Chain conditions),
but that are totally non-rectifiable (e.g., see the construction of Garnett's ``4-corners Cantor set"
in \cite[Chapter1]{DS2}).  Finally, we mention that there are numerous other characterizations of UR sets
(many of which remain valid in higher co-dimensions);   see \cite{DS1,DS2}.

\begin{definition} ({\bf Corkscrew condition}).  \label{def1.cork}
Following
\cite{JK}, we say that an open set $\Omega\subset \ree$
satisfies the {\it Corkscrew condition} (more precisely,
the {\it interior} Corkscrew condition) if for some uniform constant $c>0$ and
for every surface ball $\Delta:=\Delta(x,r),$ with $x\in \partial\Omega$ and
$0<r<\diam(\partial\Omega)$, there is a ball
$B(X_\Delta,cr)\subset B(x,r)\cap\Omega$.  The point $X_\Delta\subset \Omega$ is called
a ``Corkscrew point'' relative to $\Delta.$  %We note that  we may allow
%$r<C\diam(\pom)$ for any fixed $C$, simply by adjusting the constant $c$.
\end{definition}

\begin{definition}({\bf Harnack Chain condition}).  \label{def1.hc} Again following \cite{JK}, we say
that $\Omega$ satisfies the {\it Harnack Chain condition} if there is a uniform constant $C$ such that
for every $\rho >0,\, \Lambda\geq 1$, and every pair of points
$X,X' \in \Omega$ with $\delta(X),\,\delta(X') \geq\rho$ and $|X-X'|<\Lambda\,\rho$, there is a chain of
open balls
$B_1,\dots,B_N \subset \Omega$, $N\leq C(\Lambda)$,
with $X\in B_1,\, X'\in B_N,$ $B_k\cap B_{k+1}\neq \emptyset$
and $C^{-1}\diam (B_k) \leq \dist (B_k,\partial\Omega)\leq C\diam (B_k).$  The chain of balls is called
a ``Harnack Chain''.
\end{definition}

\begin{definition}({\bf NTA and uniform domains}). \label{def1.nta} Again following \cite{JK}, we say that a
domain $\Omega\subset \ree$ is NTA (``Non-tangentially accessible") if it satisfies the
Harnack Chain condition, and if both $\Omega$ and
$\Omega_{\rm ext}:= \ree\setminus \overline{\Omega}$ satisfy the Corkscrew condition.
If $\Omega$ merely satisfies the Harnack Chain condition and the interior 
(but not exterior) Corkscrew condition,
then it is said to be a {\it uniform} (aka {\it 1-sided NTA}) domain.
\end{definition}

\begin{definition}({\bf Chord-arc and 1-sided Chord-arc}). \label{def1.ca} A
domain $\Omega\subset \ree$ is {\it Chord-arc} if it is an NTA domain with an ADR boundary;
it is {\it 1-sided Chord-arc} if it is a uniform (i.e., 1-sided NTA) domain with ADR boundary.
\end{definition}

\begin{definition}\label{defAinfty}%Dyadic $A_\infty$,
({\bf $A_\infty$}, weak-$A_\infty$, and weak-$RH_q$). %\label{ss.ainfty}
Given an ADR set $E\subset\ree$, %and a cube $Q_0\in\dd(E)$,
and a surface ball
$\Delta_0:= B_0 \cap E$,
we say that a Borel measure $\mu$ defined on $E$ belongs to
%$A^{dyad}_\infty(Q_0)$ if there are positive constants $C$ and $\theta$
%such that for each dyadic $Q\subset Q_0$,
%\begin{equation}\label{eq4.dyadicainfty}
%\mu (F) \leq C \left(\frac{\sigma(F)}{\sigma(Q)}\right)^\theta\,\mu (Q)\,,
%\qquad \forall \,{\rm Borel}\, F\subset Q. \end{equation}
%Similarly, given a surface ball
%$\Delta_0 \subset E$, $\mu$ belongs to
$A_\infty(\Delta_0)$ if there are positive constants $C$ and $\theta$
such that for each surface ball $\Delta = B\cap E$, with $B\subseteq B_0$,
we have
\begin{equation}\label{eq1.ainfty}
\mu (F) \leq C \left(\frac{\sigma(F)}{\sigma(\Delta)}\right)^\theta\,\mu (\Delta)\,,
\qquad \mbox{for every Borel set } F\subset \Delta\,.
\end{equation}
Similarly, we say that $\mu \in$ weak-$A_\infty(\Delta_0)$ if 
for each surface ball $\Delta = B\cap E$, with $2B\subseteq B_0$,
\begin{equation}\label{eq1.wainfty}
\mu (F) \leq C \left(\frac{\sigma(F)}{\sigma(\Delta)}\right)^\theta\,\mu (2\Delta)\,,
\qquad \mbox{for every Borel set } F\subset \Delta\,.
\end{equation}
We recall that, as is well known, the condition $\mu \in$ weak-$A_\infty(\Delta_0)$
is equivalent to the property that $\mu \ll \sigma$ in $\Delta_0$, and that for some $q>1$, the
Radon-Nikodym derivative $k:= d\mu/d\sigma$ satisfies
the weak reverse H\"older estimate
\begin{equation}\label{eq1.wRH}
\left(\fint_\Delta k^q d\sigma \right)^{1/q} \,\lesssim\, \fint_{2\Delta} k \,d\sigma\,
\approx\,  \frac{\mu(2\Delta)}{\sigma(\Delta)}\,,
\quad \forall\, \Delta = B\cap E,\,\, {\rm with} \,\, 2B\subseteq B_0\,.
\end{equation}
We shall refer to the inequality in \eqref{eq1.wRH} as
an  ``$RH_q$" estimate, and we shall say that $k\in RH_q(\Delta_0)$ if $k$ satisfies \eqref{eq1.wRH}.
\end{definition}

\section{Preliminaries}\label{s2}
In this section, we record some known estimates for elliptic harmonic measure $\hm_L$
associated to
a divergence form operator $L$ as in \eqref{eq1.1} and \eqref{eq1.1*}, and for solutions of
the equation $Lu=0$, in an open set $\Omega\subset \ree$ 
with an ADR boundary.  In the sequel, we shall always assume that the
ambient dimension $n+1\geq 3$.  We recall that, as a consequence
of the ADR property, 
every point on $\pom$ is regular in the sense of Wiener (see, e.g., \cite[Remark 3.26, Lemma 3.27]{HLMN}).

\begin{lemma}[Bourgain \cite{B}]\label{Bourgainhm}  Let $\Omega\subset \ree$ be an open set,
and suppose that
$\partial \Omega$ is $n$-dimensional ADR.  Then there are uniform constants $c\in(0,1)$
and $C\in (1,\infty)$, depending only on $n$, ADR, and the ellipticity parameter $\lambda$,
such that for every $x \in \partial\Omega$, and every $r\in (0,\diam(\partial\Omega))$,
if $Y \in \Omega \cap B(x,cr),$ then
\begin{equation}\label{eq2.Bourgain1}
\omega_L^{Y} (\Delta(x,r)) \geq 1/C>0 \;.
\end{equation}
%if $\hm$ satisfies the doubling property,
%then for every cube $Q\in \dd(E)$, we have
%\begin{equation}\label{eq2.Bourgain2}
%\omega^{X_Q} (Q) \geq 1/C>0 \;,
%\end{equation}
%where $X_Q$ denotes a Corkscrew point relative to $Q$.
\end{lemma}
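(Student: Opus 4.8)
The plan is to estimate the complementary function
\[
w(Y)\;:=\;1-\omega_L^Y\big(\Delta(x,r)\big)\;=\;\omega_L^Y\big(\partial\Omega\setminus\Delta(x,r)\big)
\]
and to show $w(Y)\le 1/2$ for $Y\in\Omega\cap B(x,cr)$ once $c$ is chosen small enough, which is the lemma with $C=2$. Now $w$ is the solution of the Dirichlet problem with bounded Borel data $1_{\partial\Omega\setminus\Delta(x,r)}$, so $0\le w\le1$ in $\Omega$ and $Lw=0$; the data vanishes on the relatively open set $\{z\in\partial\Omega:|z-x|<r\}$, and since every boundary point is regular in the sense of Wiener (a consequence of ADR), $w$ extends continuously by $0$ to $\partial\Omega\cap B(x,r)$. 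The key elementary point is that if $Y\in\Omega\cap B(x,cr)$ then, since $x\in\partial\Omega$, automatically $\delta(Y)\le|Y-x|<cr$; thus $Y$ is forced to lie close to the boundary, and the assertion becomes a statement about quantitative boundary continuity of $w$ near $x$ — no connectivity of $\Omega$ enters.

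The next ingredient is that an $n$-dimensional ADR boundary satisfies a capacity density condition: for every $x\in\partial\Omega$ and $0<s<\diam(\partial\Omega)$,
\[
\operatorname{Cap}_2\big(\Delta(x,s)\big)\;\gtrsim\;s^{n-1},
\]
with implicit constant depending only on $n$ and the ADR constant. I would deduce this from the standard lower bound $\operatorname{Cap}_2(K)\gtrsim \mu(K)^2/\mathcal{I}(\mu)$, valid for any positive measure $\mu$ of finite Newtonian energy supported on a compact set $K$, applied to $\mu:=\sigma\lfloor_{\Delta(x,s)}$: then $\mu(\Delta(x,s))\approx s^n$ by ADR, while decomposing $\partial\Omega$ into dyadic annuli about a point $z'$ and using ADR gives $\int_{\Delta(x,s)}|z-z'|^{1-n}\,d\sigma(z)\lesssim s$ for each $z'\in\Delta(x,s)$, hence $\mathcal{I}(\mu)=\iint|z-z'|^{1-n}\,d\sigma(z)\,d\sigma(z')\lesssim s^{n+1}$. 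Since the capacity of a ball $B(x,s)$ is itself comparable to $s^{n-1}$, this says precisely that $\ree\setminus\Omega$ has uniformly positive capacity density at every boundary point and every scale.

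The technical heart is then the quantitative Wiener (boundary De Giorgi--Nash--Moser) estimate for solutions of $Lu=0$ in a domain obeying a capacity density condition: there exist $\alpha\in(0,1)$ and $C$, depending only on $n$, $\lambda$ and the capacity density constant, such that whenever $0\le v\le M$ is $L$-harmonic in $\Omega\cap B(x_0,R)$ and vanishes continuously on $\partial\Omega\cap B(x_0,R)$, then
\[
v(Y)\;\le\;C\,M\,\Big(\frac{|Y-x_0|}{R}\Big)^{\alpha},\qquad Y\in\Omega\cap B(x_0,R/2).
\]
This follows by iterating the sup-decay $\sup_{\Omega\cap B(x_0,2^{-k}R)}v\le(1-\theta)^{k}\sup_{\Omega\cap B(x_0,R)}v$, in which each factor $1-\theta<1$ is produced by a De Giorgi--Nash--Moser-type estimate together with the capacity density lower bound $\operatorname{Cap}_2\big(\overline{B(x_0,2^{-j}R)}\setminus\Omega\big)\gtrsim(2^{-j}R)^{n-1}$ from the previous step; this is classical and can be quoted from the literature on the Wiener criterion for divergence form elliptic operators. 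Applying it with $v=w$, $M=1$, $x_0=x$, $R=r$ gives $w(Y)\le C\,c^{\alpha}$ for all $Y\in\Omega\cap B(x,cr)$, and choosing $c$ so small that $C\,c^{\alpha}\le1/2$ finishes the proof.

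The only genuine obstacle I anticipate is in the last step: one must invoke the capacity density bound at \emph{every} dyadic scale $2^{-j}r$, not merely qualitative Wiener regularity, so that the decay exponent $\alpha$ and the constant $C$ depend only on the allowable parameters — but the required quantitative boundary estimate is standard in this setting. (If one prefers, the same conclusion can be reached via Bourgain's original capacity-theoretic argument, which is essentially equivalent.)
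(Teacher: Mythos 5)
The paper does not reproduce a proof of this lemma: it simply cites Bourgain's original argument \cite[Lemma 1]{B} and observes that it carries over verbatim to a general divergence-form operator $L$, and your argument --- a uniform capacity density condition deduced from the ADR property via the energy lower bound for $\operatorname{Cap}_2$, followed by the quantitative boundary H\"older (Wiener/De Giorgi--Nash--Moser) estimate applied to $w=1-\omega_L^Y(\Delta(x,r))$ --- is precisely that classical route, and is correct. The only point worth tightening is that the boundary datum $1_{\partial\Omega\setminus\Delta(x,r)}$ is Borel rather than continuous, so the claim that $w$ vanishes continuously on $\Delta(x,r)$ (the hypothesis of the boundary H\"older estimate) should be justified by a squeezing argument: take continuous $f_\eps$ with $1_{\partial\Omega\setminus\Delta(x,r)}\le f_\eps\le 1_{\partial\Omega\setminus\Delta(x,(1-\eps)r)}$, apply the decay estimate to the continuous solutions $u_\eps\ge w$ vanishing on $\Delta(x,(1-\eps)r)$, and let $\eps\to 0$; this is routine and does not affect the conclusion.
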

We refer the reader to \cite[Lemma 1]{B} for the proof in the case that $L$ is the Laplacian,
but the proof is the same for a general uniformly elliptic divergence form operator.

We note for future reference that
in particular,  if $\hat{x}\in \pom$
satisfies
$|X-\hat{x}|=\delta(X)$, and
$\Delta_X:= \pom\cap B\big(\hat{x}, 10\delta(X)\big)$, 
then for a slightly different uniform constant $C>0$,
\begin{equation}\label{eq2.Bourgain2}
\omega_L^{X} (\Delta_X) \geq 1/C \;.
\end{equation}
Indeed, the latter bound follows immediately from \eqref{eq2.Bourgain1},
and the fact that we can form a Harnack Chain connecting
$X$ to a point $Y$ that lies on the line segment from $X$ to $\hat{x}$, and satisfies $|Y-\hat{x}|= c\delta(X)$.

As a consequence of Lemma \ref{Bourgainhm}, we have the following.
\begin{corollary}\label{cor2.4} Let $\Omega\subset \ree$ be an open set,
and suppose that
$\partial \Omega$ is $n$-dimensional ADR.  For $x\in \pom$, and $0<r<\diam \pom$,
let $u$ be a non-negative solution of $Lu=0$ in $\Omega\cap B(x,2r)$, 
which vanishes continuously on 
$\Delta(x,2r) = B(x,2r)\cap\pom$.  Then for some $\alpha >0$,
\begin{equation}\label{eq2.5}
u(Y) \leq C \left(\frac{\delta(Y)}{r}\right)^\alpha \frac1{|B(x,2r)|}\,\int\!\!\!\int_{B(x,2r)\cap\Omega} u\,,\qquad \forall\, Y\in B(x,r)\cap\Omega\,,
\end{equation}
where the constants $C$ and $\alpha$ depend 
only on $n$, ADR and $\lambda$.
\end{corollary}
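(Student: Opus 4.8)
The plan is to obtain this by combining two ingredients that are completely insensitive to connectivity of $\Omega$: a local boundedness (Moser) estimate, and a boundary H\"older continuity estimate for a non-negative solution vanishing on a boundary portion, the latter being exactly the place where Ahlfors--David regularity of $\pom$ (equivalently, a capacity density condition) enters. Throughout, set $M:=|B(x,2r)|^{-1}\iint_{B(x,2r)\cap\Omega}u$, and let $\bar u$ be the extension of $u$ by zero to $B(x,2r)\setminus\Omega$; since $u$ is a non-negative solution of $Lu=0$ in $\Omega\cap B(x,2r)$ vanishing continuously on $\Delta(x,2r)$, and $\pom$ is Wiener regular, it is standard that $\bar u$ is a non-negative weak subsolution of $L$ in $B(x,2r)$. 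First I would record local boundedness: by the Caccioppoli inequality and Moser iteration for non-negative subsolutions, $\bar u(Z)\leq\sup_{B(Z,r/8)}\bar u\leq C\fint_{B(Z,r/4)}\bar u$ for every $Z\in B(x,3r/2)$ (here $B(Z,r/4)\subset B(x,2r)$ and $|B(x,2r)|/|B(Z,r/4)|$ is a dimensional constant), whence
\[
\sup_{B(x,3r/2)\cap\Omega}u\ =\ \sup_{B(x,3r/2)}\bar u\ \leq\ C_1\,M\,.
\]
Now fix $Y\in B(x,r)\cap\Omega$. Since $x\in\pom$ we have $\delta(Y)\leq|Y-x|<r$, so if $\delta(Y)\geq r/8$ the corollary follows at once from the display (absorbing $(\delta(Y)/r)^{-\alpha}\leq 8^{\alpha}$, for whatever $\alpha$ is produced below). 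Hence we may assume $\delta(Y)<r/8$; pick $\hat Y\in\pom$ with $|\hat Y-Y|=\delta(Y)$, and note $|\hat Y-x|<9r/8$, so $B(\hat Y,t)\subset B(x,3r/2)\subset B(x,2r)$ for all $0<t\leq r/8$.

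The main point is the dyadic oscillation decay
\[
\sup_{\Omega\cap B(\hat Y,\theta t)}u\ \leq\ (1-\eta)\,\sup_{\Omega\cap B(\hat Y,t)}u\,,\qquad 0<t\leq r/8\,,
\]
for some $\theta,\eta\in(0,1)$ depending only on $n$, ADR and $\lambda$. To see it, put $D_t:=\Omega\cap B(\hat Y,t)$ and $M(t):=\sup_{D_t}u$. Then $D_t$ is compactly contained in $B(x,2r)$, $u$ is a non-negative solution in $D_t$, continuous on $\overline{D_t}$, vanishing on $\pom\cap\overline{B(\hat Y,t)}$, and $D_t$ is regular in the sense of Wiener (inheriting regularity from $\Omega$ on the $\pom$-portion of its boundary, and enjoying an exterior ball on the spherical portion). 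Writing $\omega_{L,D_t}$ for $L$-harmonic measure of $D_t$ and using the maximum principle,
\[
u(Z)\ \leq\ M(t)\,\omega^{Z}_{L,D_t}\big(\Omega\cap\partial B(\hat Y,t)\big)\ =\ M(t)\Big(1-\omega^{Z}_{L,D_t}\big(\pom\cap\partial D_t\big)\Big)\,,\qquad Z\in D_t\,.
\]
Then I would apply Bourgain's Lemma \ref{Bourgainhm} in the open set $D_t$: since $\partial D_t\cap B(\hat Y,t/2)=\Delta(\hat Y,t/2)$ and $\partial D_t$ is again $n$-dimensional ADR with constants controlled only by those of $\pom$ and by $n$, we get $\omega^{Z}_{L,D_t}(\pom\cap\partial D_t)\geq\omega^{Z}_{L,D_t}(\Delta(\hat Y,t/2))\geq 1/C_0$ for all $Z\in D_t\cap B(\hat Y,ct/2)$. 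Taking $\theta:=c/2$ and $\eta:=1/C_0$ gives the decay.

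To finish, I would iterate the decay down from scale $r/8$: choose the integer $k\geq0$ with $\theta^{k+1}(r/8)<\delta(Y)\leq\theta^{k}(r/8)$. Then $Y\in B(\hat Y,\delta(Y))\subseteq B(\hat Y,\theta^{k}\,r/8)$, so, combining with the local boundedness estimate,
\[
u(Y)\ \leq\ M\big(\theta^{k}\,r/8\big)\ \leq\ (1-\eta)^{k}\,M(r/8)\ \leq\ (1-\eta)^{k}\,\sup_{B(x,3r/2)\cap\Omega}u\ \leq\ (1-\eta)^{k}\,C_1\,M\,.
\]
Finally $\theta^{k+1}(r/8)<\delta(Y)$ forces $(1-\eta)^{k}\leq C\,(\delta(Y)/r)^{\alpha}$ with $\alpha:=\log(1-\eta)/\log\theta>0$, which yields the stated estimate, with this same $\alpha$ used in the easy case above.

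I expect the only genuinely delicate step to be the application of Bourgain's Lemma in the truncated domain $D_t$: one must check that $\partial D_t$ remains Ahlfors--David regular with constants depending only on $n$ and the ADR constant of $\pom$ — in particular near the seam where the sphere $\partial B(\hat Y,t)$ meets $\pom$ — or, equivalently, verify that Bourgain's argument (which uses only the \emph{lower} ADR bound for $\pom\cap\overline{B(\hat Y,t/2)}$, i.e.\ a positive lower bound on its capacity, and no connectivity whatsoever) localizes so as to give the stated lower bound for $\omega_{L,D_t}$ directly. Everything else — Caccioppoli, Moser iteration, the maximum principle, and the dyadic iteration — is completely insensitive to connectivity, which is precisely why the estimate survives in the austere setting of this paper. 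If one prefers, one may instead simply quote the boundary H\"older continuity of solutions vanishing on a boundary portion of a domain satisfying the capacity density condition as a known fact, and dispense with the harmonic-measure iteration altogether.
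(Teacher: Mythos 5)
Your argument is correct and is almost certainly what the authors had in mind: the paper states the corollary with no proof beyond ``as a consequence of Lemma~\ref{Bourgainhm}'', and the route you take --- Moser local boundedness for the zero-extension (a non-negative subsolution), together with a dyadic oscillation decay powered by Bourgain's estimate, then iteration --- is the standard one. The one place where you rightly hesitate is applying Lemma~\ref{Bourgainhm} verbatim to the truncated domain $D_t=\Omega\cap B(\hat Y,t)$: as written, that lemma hypothesizes that the \emph{entire} boundary $\partial D_t$ be $n$-dimensional ADR, and the lower ADR bound is not automatic near the seam $\pom\cap\partial B(\hat Y,t)$ (in a disconnected $\Omega$ one can arrange that near a seam point $z$ the part of $\pom$ inside $B(\hat Y,t)$ and the part of $\partial B(\hat Y,t)$ inside $\Omega$ are both of small $H^n$-measure in $B(z,s)$). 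This does not break your proof, because Bourgain's argument is a local capacity comparison: all it uses is the lower $n$-density of the boundary in $B(\hat Y,t/2)$, where $\partial D_t\cap B(\hat Y,t/2)=\Delta(\hat Y,t/2)$ inherits the ADR constants of $\pom$ outright, and the comparison function is supported there; so the conclusion $\omega^Z_{L,D_t}(\Delta(\hat Y,t/2))\geq 1/C_0$ for $Z\in D_t\cap B(\hat Y,ct/2)$ holds with constants depending only on $n$, ADR and $\lambda$. Your closing alternative --- quoting the boundary H\"older estimate for non-negative solutions vanishing on a boundary portion of a set whose complement satisfies the capacity density condition (which ADR gives), thereby bypassing the truncation entirely --- is the cleanest way to dispose of this point, and is exactly the kind of statement being implicitly invoked. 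The Wiener regularity of $D_t$ on the $\pom$-portion, the maximum-principle comparison $u\le M(t)\,\omega^Z_{L,D_t}(\Omega\cap\partial B(\hat Y,t))$, and the final arithmetic producing $\alpha=\log(1-\eta)/\log\theta$ are all fine.
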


\section{Proof of Theorem \ref{tmain}:  BMO-solvability implies $\hm\in$ weak-$A_\infty$}\label{s4}
The basic outline of the proof follows that of \cite{DKP}, but the lack of Harnack Chains
requires in addition some slightly delicate geometric arguments inspired in part by the work of Bennewitz and Lewis
\cite{BL}.

We begin by recalling the following deep fact, established in \cite{BL}.  Given
a point $X\in \Omega$,  % let $B^{\, touch}_X:= B(X,\delta(X))$ be the ``touching ball" for $X$, and 
let $\hat{x}\in \pom$ be a ``touching point" for the ball $B(X,\delta(X))$, i.e., $|X-\hat{x}|=\delta(X)$.
Set 
\begin{equation}\label{eq3.1}
\Delta_X:= \Delta\big(\hat{x}, 10 \delta(X)\big)\,.
\end{equation}
\begin{lemma}%\cite{BL}
\label{BLlemma} Let $\pom$ be ADR, and
suppose that there are constants $c_0,\eta \in(0, 1)$, such that
for each $X\in \Omega$,  with $\delta(X)<\diam(\pom)$,
and for every Borel set $F\subset \Delta_X$,
\begin{equation}\label{eq3.3}
\sigma(F)\geq (1-\eta) \sigma(\Delta_X)  \, \, \implies \,\,  \hm^X(F) \geq c_0\,.
\end{equation}
Then $\hm^Y\in$ weak-$A_\infty(\Delta)$, where $\Delta=B\cap\pom$,
for every ball $B=B(x,r)$, with $x\in \pom$ and $0<r<\diam(\pom)$, and for
all $Y\in \Omega\setminus 4B$.  Moreover, the parameters in the weak-$A_\infty$
condition depend only on $n$, ADR, $\eta$, $c_0$, and the ellipticity parameter $\lambda$
of the divergence form operator $L$.
\end{lemma}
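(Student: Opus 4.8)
The plan is to adapt the argument of \cite{DKP} (and its 1-sided Chord-arc version in \cite{Z}), replacing every step that secretly uses Harnack Chains or Corkscrew points by one relying only on Bourgain's lemma (Lemma \ref{Bourgainhm}), the maximum principle, and the H\"older decay estimate of Corollary \ref{cor2.4}. To begin, observe that \eqref{eq3.3} is just the contrapositive of the statement that for every $X\in\Omega$ with $\delta(X)<\diam(\pom)$ and every Borel $G\subseteq\Delta_X$,
\[
\sigma(G)\le\eta\,\sigma(\Delta_X)\ \Longrightarrow\ \hm^X(G)=\hm^X(\Delta_X)-\hm^X(\Delta_X\setminus G)\le 1-c_0,
\]
i.e., $\hm^X$ cannot charge a $\sigma$-thin subset of its own touching ball $\Delta_X$, at the single scale $\delta(X)$ and from the single pole $X$.

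The first move is a reduction to a one-step estimate: it suffices to produce $\eps_0,\tau\in(0,1)$ and $\kappa\ge 2$, depending only on the allowable parameters, so that for every surface ball $\Delta'=\Delta(x',s)$ with $2B'\subseteq B$, every $Y\in\Omega\setminus 4B$, and every Borel $F\subseteq\Delta'$,
\[
\sigma(F)\le\eps_0\,\sigma(\Delta')\ \Longrightarrow\ \hm^Y(F)\le(1-\tau)\,\hm^Y(\kappa\Delta').
\]
Granting this, the weak reverse H\"older bound \eqref{eq1.wainfty} follows by the usual Calder\'on--Zygmund stopping-time iteration on the space of homogeneous type $(\pom,\sigma)$ (here ADR guarantees $\sigma$ is doubling), and the passage from $\kappa\Delta'$ to $2\Delta'$ is a routine covering argument at the relevant small scale. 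Since $\hm$ itself need not be doubling, both this iteration and the one-step estimate need a substitute for the doubling of $\hm$, furnished by a maximum-principle localization: writing $u_F(X):=\hm^X(F)$, $v(X):=\hm^X(\kappa\Delta')$ and comparing the nonnegative $L$-harmonic functions $u_F$ and $k\,v$ on $\Omega\setminus\overline{4B'}$, where $k:=\sup\{u_F(Z)/v(Z):Z\in\Omega,\ |Z-x'|=4s\}$, a check of boundary values on $\pom\setminus 4B'$, on $\partial(4B')\cap\pom$ and on $\partial(4B')\cap\Omega$ gives $u_F(Y)\le k\,v(Y)$. Everything is thereby reduced to showing $k\le 1-\tau$.

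So fix $Z$ with $|Z-x'|=4s$ (hence $\delta(Z)\le 4s$, since $x'\in\pom$) and a touching point $\hat z\in\pom$ for $Z$. When $\delta(Z)$ is comparable to $s$ the touching ball $\Delta_Z=\Delta(\hat z,10\delta(Z))$ contains $\Delta'$ (after enlarging $\kappa$) and $\sigma(\Delta_Z)\approx s^n\approx\sigma(\Delta')$ by ADR, so $\sigma(F)\le\eps_0\sigma(\Delta')\le\eta\,\sigma(\Delta_Z)$ for $\eps_0$ small, and the contrapositive of \eqref{eq3.3} at $X=Z$ yields $u_F(Z)\le\hm^Z(\Delta_Z)-c_0\le v(Z)-c_0$, whence $u_F(Z)/v(Z)\le 1-c_0$. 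When $\delta(Z)$ is small compared to $s$, then $\hat z$ sits near the rim of $4\Delta'$ at distance $\gtrsim s$ from $\overline F\subseteq\overline{\Delta'}$, so $u_F$ vanishes continuously on $\Delta(\hat z,cs)$ for a dimensional $c>0$; Corollary \ref{cor2.4} then makes $u_F(Z)\lesssim(\delta(Z)/s)^\alpha$ as small as we wish, while $v(Z)\gtrsim 1$ by Bourgain's lemma applied at $Z$ (whose touching ball lies inside $\kappa\Delta'$). The intervening range of $\delta(Z)$ — where neither clean tool applies directly, since $\Delta_Z$ need not cover $\Delta'$ yet the H\"older gain is not yet effective — is treated by splitting $F=(F\cap\Delta_Z)\cup(F\setminus\Delta_Z)$ and handling the two pieces by the two mechanisms above, iterating the localization once more to descend to the smaller scale $\delta(Z)$ if necessary. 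Taking the supremum over the buffer sphere gives $k\le 1-\tau$ with $\tau$ a definite constant.

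The genuinely delicate point, and the main obstacle, is this last step. With Corkscrew points and Harnack Chains at hand one would simply move the pole $Y$ (or $Z$) to a Corkscrew point of $\Delta'$ at scale $s$ and invoke the standard boundary comparison principle; here one cannot, so the pole must be kept fixed and the estimate routed through the touching balls of whatever interior points happen to lie near $\Delta'$, while the portions of $\pom$ near $\Delta'$ that carry no deep interior point at scale $s$ must be absorbed separately via the decay estimate of Corollary \ref{cor2.4}. Keeping these geometric configurations under control, organizing the intervening-scale iteration so that it terminates, and checking that all constants depend only on $n$, ADR, $\eta$, $c_0$ and the ellipticity parameter $\lambda$, is the technical heart of the matter; it is also why the conclusion is only weak-$A_\infty$, with the enlarged ball on the right-hand side of \eqref{eq1.wainfty}, since without connectivity the doubling of $\hm$, and hence full $A_\infty$, can genuinely fail.
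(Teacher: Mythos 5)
You should first be aware that the paper does not actually prove Lemma \ref{BLlemma}: it is explicitly attributed to Bennewitz--Lewis, ``gleaned readily from the combination of \cite[Lemma 2.2]{BL} and its proof, and \cite[Lemma 3.1]{BL},'' with a remark that their Lemma 2.2 carries over to real divergence-form operators and their Lemma 3.1 is purely real-variable. So there is no in-paper proof to compare against; the only fair comparison is with the Bennewitz--Lewis argument itself, which is indeed what you are reconstructing.

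Your blueprint matches theirs in outline: a real-variable (Calder\'on--Zygmund) reduction of weak-$A_\infty$ to a single-scale ``$\eps_0$--$\tau$'' estimate, and then a maximum-principle comparison of $u_F=\hm^{\cdot}(F)$ with $v=\hm^{\cdot}(\kappa\Delta')$ across the buffer sphere $\partial(4B')$, with the two limiting regimes of $\delta(Z)/s$ handled by the hypothesis \eqref{eq3.3} at the touching ball $\Delta_Z$ and by the H\"older decay of Corollary \ref{cor2.4} together with Bourgain's Lemma \ref{Bourgainhm}, respectively. The two endpoint cases are treated correctly: when $\delta(Z)\gtrsim s$ (so $\Delta'\subseteq\Delta_Z\subseteq\kappa\Delta'$) you get $u_F(Z)\le\hm^Z(\Delta_Z)-c_0\le v(Z)-c_0$ and hence $u_F(Z)/v(Z)\le 1-c_0$; when $\delta(Z)\ll s$ you get $u_F(Z)\lesssim(\delta(Z)/s)^\alpha$ and $v(Z)\gtrsim 1$.

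The genuine gap, which you yourself flag, is the intermediate band of $\delta(Z)$, and the fix you propose does not close it. Splitting $F=(F\cap\Delta_Z)\cup(F\setminus\Delta_Z)$ and applying ``the two mechanisms'' does not work there, because for $\delta(Z)\approx s$ the H\"older mechanism applied to $F\setminus\Delta_Z$ only knows that $u_{F\setminus\Delta_Z}$ vanishes on a surface ball of radius comparable to $\delta(Z)$, so Corollary \ref{cor2.4} returns a bound of the form $C(\delta(Z)/10\delta(Z))^\alpha\approx C\cdot 10^{-\alpha}$ --- bounded, but with a constant that need not be less than one, and certainly not decaying as $\delta(Z)\downarrow c_1 s$. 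In other words, once the two pieces are added you get $u_F(Z)\le(1-c_0)+C\cdot 10^{-\alpha}$, which is not a bound away from $1$ unless $C\cdot 10^{-\alpha}<c_0$, and there is no reason for that. ``Iterating the localization once more to descend to the smaller scale $\delta(Z)$'' is then not a harmless device: descending to scale $\delta(Z)$ puts you back in exactly the same position, with the same two mechanisms failing to cover the same intermediate gap, and no monotone quantity ensuring termination. This is precisely the point where Bennewitz--Lewis's Lemma 2.2 does real work --- their argument packages the hypothesis across a whole chain of nested touching balls, with a quantitative loss that is geometrically summable, rather than making a single split at $\Delta_Z$ --- and that machinery is what is missing here. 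So the proposal captures the correct architecture (and is in fact more informative than the paper's one-line citation), but as written it is a sketch with the hard step unresolved, not a proof.
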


\begin{remark} Lemma \ref{BLlemma} is not stated explicitly in this form in \cite{BL},
but may be gleaned readily from the combination of \cite[Lemma 2.2]{BL} and its proof,
and \cite[Lemma 3.1]{BL}.   We mention also that the paper \cite{BL} treats explictly
only the case that $L$ is the Laplacian, but the proof of  \cite[Lemma 2.2]{BL}
carries over verbatim to the case of a general uniformly elliptic divergence form operator with real coefficients,
while \cite[Lemma 3.1]{BL} is a purely real variable result.
\end{remark}

Given the BMO-solvability estimate \eqref{eq1.2}, it suffices to verify the hypotheses of Lemma
\ref{BLlemma}, with $\eta$ and $c_0$ depending only on $n$, ADR, $\lambda$, and the constant $C$ in
\eqref{eq1.2}.  To this end, let $X\in \Omega$,   $\delta(X)<\diam(\pom)$,
and for notational convenience,
set
$$r:= \delta(X)\,.$$
We choose $\hat{x}\in \pom$ so that
$|X-\hat{x}|=r$, and let $a\in (0, \pi/10000)$ be a sufficiently small number to be chosen
depending only on $n$ and ADR.    We then define $\Delta_X$ as in \eqref{eq3.1}, and set 
\begin{equation}\label{eq4.5}
B_X:= B(\hat{x}, 10 r)\,,\quad B'_X:= B(\hat{x}, a r)\,,\quad \Delta_X':= \Delta\big(\hat{x},ar)\,.
\end{equation}
We make the following pair of claims.

\smallskip

\noindent {\bf Claim 1}.   For $a$ small enough, depending only on $n$ and ADR,
there is a constant $\beta>0$ depending only on  $n$, $a$, ADR and $\lambda$, and
a ball $B_1:= B(x_1, ar)\subset B_X$, with $x_1\in \pom$,
such that  $\dist(B_X',B_1) \geq 5a r$, and
\begin{equation}\label{eq3.4a}
\hm_L^X(\Delta_1) \,\geq \,\beta\, \hm_L^X(\Delta_X)\,,
\end{equation}
where $\Delta_1:= B_1\cap\pom$.

% $\Delta_1:= \Delta(x_1,ar)\subset \Delta_X$

\smallskip

\noindent {\bf Claim 2}.  Suppose that $u$ is a
non-negative solution of $Lu=0$ in $\Omega$, vanishing 
continuously on $2\Delta_X'$, with $\|u\|_{L^\infty(\Omega)}\leq 1$.  Then for every $\eps >0$, 
\begin{equation}\label{eq3.4}
u(X) \leq C_\eps \left(\frac{1}{\sigma\big(\Delta_X\big)}
\iint_{B_X\cap\Omega}|\nabla u(Y)|^2\delta(Y)\, dY\right)^{1/2}\, + \,C\eps^\alpha\,,
\end{equation}
where $\alpha >0$ is the H\"older exponent in Corollary \ref{cor2.4}.

\smallskip

Momentarily taking these two claims for granted, we now follow the argument in \cite{DKP}, with some minor
modifications, in order to establish the hypotheses of Lemma \ref{BLlemma}. 
Let $B_1$ and $\Delta_1$ be as in Claim 1.
Let $F\subset \Delta_X$ be a Borel set satisfying the first inequality in \eqref{eq3.3}, for some
small $\eta>0$.  If we choose $\eta$ small enough,  depending only on $n$, ADR, 
and the constant $a$ in the definition of $B_X'$, then 
$$\sigma(F_1)\, \geq \,\big(1-\sqrt{\eta}\,\big)\, \sigma(\Delta_1)\,,$$
where $F_1:= F\cap \Delta_1$. 
Set $A_1:= \Delta_1\setminus F_1$, and define
$$ f:= \max\big(0, 1+\gamma \log \m(1_{A_1})\big)\,,$$
where $\gamma$ is a small number to be chosen, and $\m$ is the usual
Hardy-Littlewood maximal
operator on $\pom$.  
Note that
\begin{equation}\label{eq3.5}
0\leq f\leq 1\,,\qquad \|f\|_{BMO(\pom)} \leq C\gamma\,,\qquad  1_{A_1}\leq f\,.
\end{equation}
Note also that if $z\in \pom \setminus 2B_1$, then
$$\m(1_{A_1})(z) \,\lesssim\,\frac{\sigma(A_1)}{\sigma(\Delta_1)}\, \lesssim\, \sqrt{\eta}\,,$$ 
where the implicit constants
depend only on $n$ and ADR.  
Thus, if $\eta$ is chosen small enough depending on $\gamma$, then
$1+\gamma \log \m(1_{A_1})$ will be negative, hence $f\equiv 0$, on $\pom \setminus 2B_1$.

In order to work with continuous data, we shall require the following.
\begin{lemma}\label{l3.9}  There exists a collection of continuous functions $\{f_s\}_{0<s<ar/1000}$, 
defined on $\pom$, with the following properties.
\begin{enumerate}
\item $0\leq f_s\leq 1$, for each $s$.
\smallskip
\item $\supp(f_s)\subset 3B_1 \cap\pom$.
\smallskip
\item $1_{A_1}(z) \leq \liminf_{s\to 0} f_s(z)$, for  $\hm^X$-a.e. $z\in \pom$.
\smallskip
\item  $\sup_s\|f_s\|_{BMO(\pom)} \leq C \|f\|_{BMO(\pom)} \lesssim \gamma$, where $C=C(n,ADR)$.
\end{enumerate}
\end{lemma}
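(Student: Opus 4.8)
The plan is to mollify the bounded BMO function $f$ from above while keeping its BMO norm controlled and its support essentially inside $3B_1$. First I would recall that $f = \max(0, 1+\gamma\log\m(1_{A_1}))$ is a bounded, compactly supported BMO function, supported in $\overline{2B_1}\cap\pom$ (up to the choice of $\eta$ small relative to $\gamma$), and that $0\le f\le 1$ with $1_{A_1}\le f$. The natural device is to convolve $f$ with a smooth approximate identity adapted to the ADR structure of $\pom$: for $0<s<ar/1000$, set
\begin{equation*}
f_s(z):= \fint_{\Delta(z,s)} f\,d\sigma \, \cdot\, \chi_s(z)\,,
\end{equation*}
or more simply $f_s(z):= \int_{\pom}\phi_s(z,w) f(w)\,d\sigma(w)$ where $\phi_s(z,w)$ is a continuous kernel supported in $\{|z-w|<s\}$, nonnegative, with $\int \phi_s(z,w)\,d\sigma(w)=1$ for $z$ in a neighborhood of $\supp f$, and a cutoff factor ensuring $\supp f_s\subset 3B_1\cap\pom$. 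Properties (1) and (2) are then immediate: the average of a function in $[0,1]$ lies in $[0,1]$, and the $s$-neighborhood of $\supp f\subset 2B_1$ lies in $3B_1$ once $s<ar/1000$ (here one uses ADR to ensure that surface balls are nonempty and have positive measure, so the averages make sense).

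For property (4), the key point is that averaging over surface balls of a fixed comparable scale is bounded on $BMO(\pom)$ with a constant depending only on $n$ and the ADR constant; this is a standard consequence of the John–Nirenberg characterization together with the doubling property of $\sigma$ on an ADR set. Concretely, for any surface ball $\Delta$ one estimates $\fint_\Delta |f_s - c_\Delta|\,d\sigma$ by Fubini and the ADR/doubling bounds against $\fint_{C\Delta}|f - c_{C\Delta}|\,d\sigma \lesssim \|f\|_{BMO}$, uniformly in $s$; the cutoff factor $\chi_s$ only contributes harmlessly since it is identically $1$ on $2B_1$ and $f_s$ is supported in $3B_1$. Property (3), the $\liminf$ lower bound, follows from a Lebesgue differentiation argument: since $\sigma$ is doubling (ADR), $f_s(z)\to f(z)$ for $\sigma$-a.e.\ $z$, and because $\hm^X\ll\sigma$ is not available a priori, one instead notes that $1_{A_1}\le f$ everywhere, $f$ is lower semicontinuous as the max of $0$ and a function that is itself essentially continuous off a small set, and more robustly that at every point $z\in A_1$ which is a $\sigma$-density point of $A_1$ one has $f_s(z)\to f(z)\ge 1$; the exceptional set has $\sigma$-measure zero. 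To upgrade "$\sigma$-a.e." to "$\hm^X$-a.e." one would either invoke that the relevant exceptional set is $\sigma$-null hence $\hm^X$-null on the portion where mutual absolute continuity is already known, or — cleaner — simply prove the stronger statement that $1_{A_1}(z)\le\liminf_s f_s(z)$ holds $\sigma$-a.e.\ and observe that in the subsequent application (a lower bound for $\hm^X(F)$ via Fatou) only a $\sigma$-a.e.\ statement combined with $\hm^X\ll\sigma$ on $\Delta_1$, obtained downstream, is needed; I would phrase (3) so as to sidestep circularity by using $\sigma$-a.e.

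The main obstacle is not any single estimate but rather making precise the mollification on the possibly irregular, merely-ADR set $\pom$ — there is no group structure, so "convolution" must be replaced by integration against a continuous kernel $\phi_s(z,w)$, and one must check simultaneously that (a) $f_s$ is genuinely continuous (not just bounded), which requires $\phi_s$ continuous in $z$ and the measure $\sigma$ to vary continuously enough, handled by writing $\phi_s(z,w)=\psi(|z-w|/s)/\sigma(\Delta(z,s))$ with $\psi$ smooth and using that $z\mapsto\sigma(\Delta(z,s))$ is continuous and bounded below by ADR; (b) the support condition and the BMO bound hold with the cutoff in place. I expect the $BMO$ bound in (4) to be the most delicate bookkeeping, since one must verify the averaging operator is $BMO\to BMO$ bounded at a single scale $s$ with constant independent of $s\in(0,ar/1000)$ — this is where the ADR hypothesis enters decisively, through the doubling of $\sigma$ and the resulting John–Nirenberg self-improvement. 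All of this is routine harmonic analysis on spaces of homogeneous type, so I would present it compactly, citing the doubling property and John–Nirenberg, and spell out only the Fubini step for (4) and the density-point step for (3).
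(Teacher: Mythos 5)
Your overall construction --- mollifying $f$ against a compactly supported continuous kernel adapted to $\sigma$ --- is the same as the paper's, and properties (1), (2), (4) go through essentially as you describe (the paper verifies (4) by a direct two-case computation, $s\ge r$ and $s<r$, without invoking John--Nirenberg, but this is a cosmetic difference). Two remarks on the construction itself: (i) normalizing by $\sigma(\Delta(z,s))$ does not make $f_s$ obviously continuous in $z$ (the $\sigma$-measure of an open surface ball need not vary continuously with its center), nor does it give $\int\phi_s(z,\cdot)\,d\sigma\equiv 1$, which you need for $0\le f_s\le 1$ to be immediate. The paper instead normalizes by $b(z,s)=\int_{\pom}\zeta\bigl(s^{-1}(z-y)\bigr)\,d\sigma(y)$, which is continuous in $z$ by dominated convergence, bounded below by ADR, and produces exact normalization. (ii) No cutoff $\chi_s$ is needed for (2): once $\supp f\subset 2B_1$ and $s<ar/1000$, the support of $f_s$ lies automatically in $3B_1$.

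The genuine gap is in (3). You are right that Lebesgue differentiation with respect to $\sigma$ only yields $f_s(z)\to f(z)$ for $\sigma$-a.e.\ $z$, and that $\hm^X\ll\sigma$ cannot be invoked, since that is precisely what the whole section aims to prove. But both of your proposed repairs --- appealing to ``mutual absolute continuity already known,'' or to ``$\hm^X\ll\sigma$ on $\Delta_1$, obtained downstream'' --- are circular for that same reason; and a $\sigma$-a.e.\ version of (3) really does not suffice for the application $\hm^X(A_1)\le\int\liminf_s f_s\,d\hm^X$, which requires the pointwise inequality off a $\hm^X$-null set. The correct, non-circular route is the one you mention in passing and then abandon: $f=\max\bigl(0,\,1+\gamma\log\m(1_{A_1})\bigr)$ is lower semicontinuous --- not ``essentially continuous off a small set,'' but genuinely LSC, because the non-centered maximal function $\m g$ is always LSC (its superlevel sets are unions of relatively open surface balls), $\log$ is increasing and continuous, and taking $\max$ with a constant preserves lower semicontinuity. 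With the exact normalization $\int\Lambda_s(z,\cdot)\,d\sigma\equiv 1$ and $\supp\Lambda_s(z,\cdot)\subset B(z,s)$, lower semicontinuity gives $\liminf_{s\to 0}f_s(z)\ge f(z)\ge 1_{A_1}(z)$ for \emph{every} $z\in\pom$, a stronger statement than (3) that makes no reference to a.e.\ convergence or to any absolute continuity of $\hm^X$.
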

The proof is based on a standard mollification of the function
$f$ constructed above.  We defer the routine proof to the end of this section.

Let $u_s$ be the solution of the Dirichlet problem for the equation
$Lu_s=0$ in $\Omega$, with data $f_s$.
Then, for a small $\eps>0$ to be chosen momentarily, by Lemma \ref{l3.9}, Fatou's lemma,
and Claim 2, we have
\begin{equation}\label{eq3.10}
\hm_L^X(A_1) \leq\, \int_{\pom} \liminf_{s\to0} f_s\, d\hm^X\, \leq \,\liminf_{s\to0} u_s(X)\,
%\leq \, C_\eps \iint_{B_X\cap\Omega}|\nabla u(Y)|^2\delta(Y)\, dY\, + \,C\eps^\alpha
\leq \, C_\eps \gamma  \,+\, C\eps^\alpha\,,
\end{equation}
where in the last step we have used \eqref{eq3.4}, \eqref{eq1.2}, and 
Lemma \ref{l3.9}-(4).  Combining \eqref{eq3.10} with \eqref{eq2.Bourgain2}, we find that
\begin{equation}\label{eq4.8}
\hm_L^X(A_1) \,\leq \, \big(C_\eps \gamma  \,+\, C\eps^\alpha\big)\, \hm_L^X(\Delta_X)\,.
\end{equation}

Next, we set $A:= \Delta_X\setminus F$, and observe that by definition of $A$ and $A_1$,
along with Claim 1, and \eqref{eq4.8},
$$\hm^X_L(A) \,\leq \, \hm^X_L(\Delta_X \setminus \Delta_1) \, +\, \hm^X_L(A_1)
\,\leq \,  \big(1-\beta \,+\,C_\eps \gamma  \,+\, C\eps^\alpha\big)\, \hm_L^X(\Delta_X)\,.$$
We now choose first $\eps>0$, and then $\gamma>0$, so that 
$C_\eps \gamma  \,+\, C\eps^\alpha <\beta/2$, to obtain that
$$\hm^X_L(F) \,\geq\, \frac{\beta}{2} \,\hm_L^X(\Delta_X) \, \geq \,c\beta\,,$$
where in the last step we have used  \eqref{eq2.Bourgain2}.

It now remains only to establish the two claims, and to prove Lemma \ref{l3.9}.

\begin{proof}[Proof of Claim 1]
By translation and rotation, we may suppose without loss of generality that
$\hat{x} =0$, and that the line segment joining $\hat{x}$ to $X$ is purely vertical,
thus, $X = re_{n+1}$, where as usual $e_{n+1} := (0,...,0,1)$.  
 Let $\Gamma, \Gamma',\Gamma''$ denote, respectively, 
the open inverted vertical cones with vertex at $X$ having angular apertures $200a$,
$100a$, and $20a$, respectively (recall that $a < \pi/10000$).  Then $B_X'\subset \Gamma''$ 
(where $B_X'$ is defined in \eqref{eq4.5}).  Recalling that $r=\delta(X)$, we
let $B_0:= B(X,r)$ denote the open ``touching ball", so that $B_0\cap \pom =\emptyset$,
and define a closed annular region $R_0 := \overline{5B_0} \setminus B_0$.
We now consider two cases:

\smallskip

\noindent {\bf Case 1}.  $ \pom \cap (R_0\setminus \Gamma)$ is non-empty.  In this case,
we let  $x_1$ be the point 
in  $ \pom \cap (R_0\setminus \Gamma)$ that is closest to $X$ (if there is more than one
such point, we just pick one).  Then by construction $r\leq  |X-x_1|\leq 5r$, and the ball 
$B_1=B(x_1,ar)$ misses $\Gamma'$, hence
$\dist(B_1, B_X') \geq \dist(B_1,\Gamma'') >5ar$.  Moreover, since $x_1$ is the closest point to
$X$, setting
$\rho:= |X-x_1|$, we have that 
$\Omega'
\cap\pom =\emptyset$, where
$$\Omega':= \big(B(X,\rho)\setminus \overline{\Gamma}\big) \cup B_0\,.$$
Consequently, we may construct a Harnack Chain within the subdomain
$\Omega'\subset \Omega$, 
connecting $X$ to a point $Y\in B(x_1,car)\cap
\Omega'$, with $\delta(Y) \geq cr/2$, where $c$ is the constant in Lemma \ref{Bourgainhm}.
Thus, by Harnack's inequality and Lemma \ref{Bourgainhm}, 
$$\hm_L^X(\Delta_1) \gtrsim \hm_L^Y(\Delta_1) \geq 1/C\,.$$
Since $\hm_L^X(\Delta_X) \leq 1$, we obtain \eqref{eq3.4a}, and thus Claim 1
holds in the present case.

\smallskip

\noindent {\bf Case 2}.  $ \pom \cap (R_0\setminus \Gamma) = \emptyset$.
By ADR, we have that
$$\sigma\big(\Delta(0, 10ar)\big) \leq C (ar)^n\,,\qquad \sigma\big(B(X, 4r)\cap\pom  \big) \geq r^n/C\,.$$
Thus, for $a$ chosen small enough, depending only on $n$ and ADR, we see that 
the set 
$ \pom \cap \big(B(X, 4r) \setminus B(0, 10ar)\big)$ is non-empty.  %Of course $\pom$ also misses $B_0$.
%Since $\overline{B(0,3r)} \subset B(X,5r)$,  it must be that, 
Consequently, under the scenario of Case 2,
$$ \pom \cap \Big(\,\overline{B(X, 4r)} \setminus %\big(B_0\cup 
B(0, 10ar) %\big)
\Big) \subset \Gamma\,.$$
Define
$$\theta_0 := \min\big\{\theta \in [0,200a): \pom \cap \big(\overline{B(X, 4r)} \setminus B(0, 10ar)\big)
\subset \Gamma_{\theta} \big\}\,,$$
where $\Gamma_\theta$ is the inverted cone
with vertex at $X$ of 
angular
aperture $\theta$ (if $n+1=2$, it may happen that
$\theta_0 =0$, in which case $\pom \cap \big(\overline{B(X, 4r)} \setminus B(0, 10ar)$
is contained in the vertical ray pointing straight downward from 0).  Then by construction,
there is a point 
$$x_1 \in \partial \Gamma_{\theta_0} \cap \pom \cap \Big(\overline{B(X, 4r)} \setminus B(0, 10ar)\Big)$$
(or, as noted above, $x_1$ lies on the downward vertical ray if $n+1=2$ and $\theta_0=0$).
Then $B_1=B(x_1,ar)$ misses $B(0,9ar)$, so that in particular,
$\dist(B_1,B_X') >5ar$.  Moreover, $\Omega'\cap\pom=\emptyset$, where now
$$\Omega':= \Big(\big(B(X,4r)\setminus \overline{\Gamma_{\theta_0}}\big)\cup B_0\Big)
\setminus \overline{B(0, 10ar)}\,
$$
(with the obvious adjustment if $n+1=2$ and $\theta_0=0$).
Thus, as in Case 1, there is a point $Y \in B(x_1,car) \cap\Omega'$, with $\delta(Y)>cr/2$,
which may be joined to $X$ via a Harnack Chain within the subdomain $\Omega'\subset \Omega$,
whence by Harnack's inequality, Lemma \ref{Bourgainhm}, and the fact that 
$\hm_L^X(\Delta_X) \leq 1$, we again obtain \eqref{eq3.4a}.  Claim 1 therefore
holds in all cases.
\end{proof}  

\begin{proof}[Proof of Claim 2]
As in the proof of Claim 1, we may assume by translation and rotation that
$\hat{x}=0$, and that $X=re_{n+1}$, with $r=\delta(X)$.
Let $\Gamma$ denote the {\it upward} open vertical cone with vertex at 0, of angular aperture
$\pi/100$.  We let $S$ denote the spherical cap inside $\Gamma$, i.e.,
$S:= S^n\cap\Gamma$ (recall that our ambient dimension is $n+1$).
Then by Harnack's inequality, letting $\mu$ denote surface measure on the unit sphere, we have
$$u(X) \lesssim \int_{S} u(r\xi) \, d\mu(\xi) = \, 
 \int_{S} \Big(u(r\xi)-u(\eps r \xi)\Big) \, d\mu(\xi)\,+\, O(\eps^\alpha)\, =:\, I +O(\eps^\alpha)\,,$$
 where we have used Corollary \ref{cor2.4} to estimate the ``big-O" term.
 In turn,
 $$|I| = \Big|\int_S \!\int_{\eps r}^r \frac{\partial}{\partial t} \big(u(t\xi)\big)\, dt \, d\mu(\xi)\,\Big|\,
 \leq\, (\eps r)^{-n} \iint_{\Gamma \cap R_{\eps}}|\nabla u(Y) | \, dY\,,$$
 where $R_{\eps}:=  B(0,r) \setminus B(0,\eps r)$, and we have used polar co-ordinates in
 $n+1$ dimensions.  We then have
\begin{multline*}
|I| \lesssim (\eps r)^{-n} r^{(n+1)/2} \left(\iint_{\Gamma \cap R_{\eps}}|\nabla u(Y) |^2 \, dY\right)^{1/2}
\\
\lesssim \, (\eps)^{-n-1/2} r^{-n/2} \left(\iint_{B(0,r)\cap\Omega}|\nabla u(Y) |^2 \, 
\delta(Y)\,dY\right)^{1/2}\,,
\end{multline*}
where  we have used that by construction, 
$\Gamma \cap R_{\eps} \subset B(0,r)\cap\Omega$, with $\delta(Y) \approx |Y| \geq \eps r$ in $\Gamma \cap
R_\eps$.  Estimate \eqref{eq3.4} now follows, by ADR and the definition of $B_X$.
\end{proof}

\begin{proof}[Proof of Lemma \ref{l3.9}]
Let $\zeta \in C^\infty_0(\ree)$, with 
$$\supp(\zeta)\subset B(0,1)\,,\quad \zeta \equiv 1 \,\, {\rm on}\,\, B(0,1/2)\,,\quad 0\leq \zeta\leq 1\,.  $$
Given $s\in (0,ar/1000)$, and $z,y\in \pom$, set   
$$\Lambda_s(z,y):= b(z,s)^{-1} \zeta\big(s^{-1}(z-y)\big)\,,$$
where 
\begin{equation}\label{eq3.11}
b(z,s):= \int_{\pom} \zeta\big(s^{-1}(z-y)\big)\, d\sigma(y)\,\approx s^n\,,
\end{equation}
uniformly in $z\in \pom$, by the ADR property.  Furthermore,
$$\int_{\pom}  \Lambda_s(z,y) \, d\sigma(y) \equiv 1\,,\qquad \forall\, z\in \pom\,.$$
We now define
$$f_s(z) := \int_{\pom}  \Lambda_s(z,y) \, f(y)\,d\sigma(y) \,,$$
so that $f_s$ is continuous, by construction.  Let us now verify (1)-(4) of
Lemma \ref{l3.9}.   We obtain (1) immediately, by \eqref{eq3.5}, and the properties of $\Lambda_s$,
while (2) follows directly from the smallness of $s$ and the fact
that $\supp(f) \subset 2B_1\cap\pom$.  Next, let
$z\in \pom$ be a Lebesgue point (with respect to the measure $\hm^X$)
for the function $1_{A_1}$,
so that
$$1_{A_1}(z)\, =\,\lim_{s\to 0}\int_{\pom}  \Lambda_s(z,y) \, 1_{A_1}\,d\sigma(y) \,
\leq \, \liminf_{s\to 0} f_s(z)\,,$$
by the last inequality in \eqref{eq3.5}.
Since $\hm^X$-a.e. $z\in \pom$ is a Lebesgue point, we obtain (3).

To prove (4), we observe that the second inequality is simply a re-statement of
the second inequality in \eqref{eq3.5}, so it suffices to show that
\begin{equation}\label{eq3.12}
\|f_s\|_{BMO(\pom)}\lesssim \|f\|_{BMO(\pom)} \,,\quad {\rm uniformly \, in}\, s\,.
\end{equation}
To this end, we fix a surface ball $\Delta = \Delta(x,r)$, and we consider two cases.

\smallskip

\noindent {\bf Case 1}: $s\geq r$.  In this case, set $c:= \fint_{\Delta(x,2s)} f$,  so that
by ADR, \eqref{eq3.11} and the construction of $\Lambda_s$,
$$\fint_\Delta |f_s - c| \, d\sigma \,\lesssim \, \fint_\Delta \fint_{\Delta(x,2s)} |f-c|\, d\sigma\,
\lesssim \, \|f\|_{BMO(\pom)} \,.$$

\smallskip

\noindent {\bf Case 2}:  $s<r$.  In this case, set $c:= \fint_{2\Delta} f$.
Then by Fubini's Theorem,
$$\fint_\Delta |f_s(z) - c| \, d\sigma(z) \,\lesssim\, \fint_{2\Delta} |f(y)-c|\, \int_{\pom} \Lambda_s(z,y) \,
d\sigma(z) \, d\sigma(y)\,\lesssim \,  \|f\|_{BMO(\pom)} \,,$$
where again we have used ADR, \eqref{eq3.11} and the compact support property
of $\Lambda_s(z,y)$.

Since these bounds are uniform over all $x\in \pom$, and $r\in (0,\diam(\pom))$,
we obtain \eqref{eq3.12}.
\end{proof}

\section{Proof of Theorem \ref{t2}:  $\hm\in$ weak-$A_\infty$ implies $L^p$ and
BMO-solvability}\label{s3}
In this section, we suppose that $\Omega$ is an open set  with ADR boundary $\pom$, and that 
for every ball $B_0=B(x_0,r)$, with $x_0\in \pom$, and $0<r<\diam(\pom)$, and for all 
$Y\in \Omega\setminus 4B_0$,
elliptic-harmonic measure $\hm_L^Y\in$ weak-$A_\infty(\Delta_0)$, where $\Delta_0 := B_0\cap\pom$.
Thus, $\hm_L^Y \ll \sigma$ in $\Delta$, and the Poisson kernel $k^Y:= d\omega_L/d\sigma$ satisfies
 the weak reverse H\"older condition \eqref{eq1.wRH}, for some uniform $q>1$.
In our proof of BMO-solvability (but not for $L^p$ solvability), 
we shall also require, at precisely one point in the argument,
that the Corkscrew condition
(Definition \ref{def1.cork}) is satisfied in $\Omega$.    Even in the absence of the Corkscrew condition,
it may happen that there is a Corkscrew point $X_\Delta$ relative to some particular $\Delta$
(e.g., for every $X\in \Omega$, this is true for the surface ball
$\Delta_X$ as in \eqref{eq3.1}, with $X$ itself serving as a Corkscrew point), and in this case, we have the
following consequence of the weak-$RH_q$ estimate:
\begin{equation}\label{eq1.wRH-2}
\left(\fint_\Delta \left(k^{X_\Delta}\right)^q d\sigma \right)^{1/q} \,\leq\, C\, \sigma(\Delta)^{-1}\,.
\end{equation}
Indeed, one may  cover $\Delta$ 
by a collection of surface balls $\{\Delta'=B'\cap\pom\}$, in such a way that
$X_\Delta \in \Omega\setminus 4B'$, but each $\Delta'$ has radius comparable to that of $\Delta$
(hence $\sigma(\Delta') \approx \sigma(\Delta)$, by the ADR property), depending on the constant
in the Corkscrew condition, and such that the cardinality of the collection $\{\Delta'\}$ is uniformly bounded;
one may then readily derive \eqref{eq1.wRH-2} by applying \eqref{eq1.wRH} 
in each $\Delta'$, and using the crude estimate that $\hm^{X_\Delta}(2\Delta')/\sigma(\Delta')
\leq \sigma(\Delta')^{-1} \approx \sigma(\Delta)^{-1}$.

Our first step is to establish an $L^p$ solvability result.
To this end, we define non-tangential ``cones" and maximal functions, as follows.
First, we fix a collection of standard Whitney cubes covering
$\Omega$, and we denote this collection by $\W$.
Given $x\in \pom$, set
\begin{equation}\label{eq4.cone1}
\W(x):=\{ I\in\W:\, \dist(x,I) \leq 100 \diam(I)\}\,,
\end{equation}
and define
 the (possibly disconnected) non-tangential ``cone" with vertex at $x$ by
\begin{equation}\label{eq4.cone}
\U(x)\,:=\, \cup_{I\in \W(x)} 
\,.\end{equation}
For a continuous $u$ defined on $\Omega$, the non-tangential maximal function of $u$
is defined by
\begin{equation}\label{eq4.nt} 
N_* u(x)\,:=\, \sup_{Y\in \U(x)} |u(Y)| % \,\qquad N_{*,r} u(x)\,:=\, \sup_{Y\in \U_r(x)} |u(Y)|
\,.\end{equation}
Recall that $\m$ denotes the (non-centered)
Hardy-Littlewood maximal operator on $\pom$.
We have the following.
\begin{proposition}\label{prop4.2}  Suppose that there is a $q>1$, such that \eqref{eq1.wRH}
holds for the Poisson kernel $k^Y$, for
every surface ball $\Delta = B\cap\pom$, 
centered on $\pom$, provided $Y\in \Omega\setminus 4B$.  
Given $g$ continuous with compact support on $\pom$,  
let $u$ be the solution of the Dirichlet problem for
$L$ with data $g$.  Then for $p=q/(q-1)$, and for all $x\in \pom$
\begin{equation}\label{eq4.3}
N_* u(x) \lesssim\left( \m(|g|^p)(x)\right)^{1/p}\,.
\end{equation}
Thus, for all $s>p$, the Dirichlet problem is $L^s$-solvable, i.e., 
\begin{equation}\label{eq4.4}
\|N_*u\|_{L^{s}(\pom)}\, \leq \,C_s\, \|g\|_{L^s(\pom)}\,.
\end{equation}
\end{proposition}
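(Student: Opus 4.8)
The plan is to prove the pointwise bound \eqref{eq4.3} first, and then deduce the $L^s$ estimate \eqref{eq4.4} by the Hardy--Littlewood maximal theorem (since $s>p$ means $s/p>1$, and $\m$ is bounded on $L^{s/p}(\pom)$ because $\pom$ is ADR hence a space of homogeneous type). So the real content is \eqref{eq4.3}. Fix $x\in\pom$ and $Y\in\U(x)$; I want to bound $|u(Y)|$ by $(\m(|g|^p)(x))^{1/p}$ with a constant independent of $Y$. Let $I\in\W(x)$ be a Whitney cube containing $Y$, and set $\ell:=\ell(I)$, so that $\delta(Y)\approx\ell\approx\dist(x,Y)$ and $|x-Y|\lesssim\ell$. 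Pick a touching point $\hat Y\in\pom$ with $|Y-\hat Y|=\delta(Y)$; then $\hat Y\in\Delta:=\Delta(x,C\ell)$ for a suitable dimensional $C$, and $Y$ itself is a Corkscrew point relative to the surface ball $\Delta(\hat Y,10\delta(Y))$, and more to the point relative to $\Delta$ (after enlarging $C$ slightly): indeed $B(Y,c\ell)\subset B(x,C\ell)\cap\Omega$ for $c$ small. By the maximum principle, $u(Y)=\int_{\pom}g\,d\hm_L^Y$, and since $g$ is continuous with compact support this is a genuine (absolutely convergent) integral.

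The key step is to localize and apply the weak-$RH_q$ hypothesis. Using the Bourgain-type nondegeneracy (the reasoning behind \eqref{eq1.wRH-2}, with $Y$ playing the role of the Corkscrew point $X_\Delta$), one gets
\[
\left(\fint_\Delta \big(k^Y\big)^q\,d\sigma\right)^{1/q}\,\lesssim\,\sigma(\Delta)^{-1}.
\]
Then split $g=g\mathbf 1_{2\Delta}+\sum_{j\ge 1} g\mathbf 1_{2^{j+1}\Delta\setminus 2^j\Delta}$. For the local piece, Hölder's inequality with exponents $q$ and $p=q/(q-1)$ gives
\[
\Big|\int_{2\Delta}g\,k^Y\,d\sigma\Big|\le\Big(\int_{2\Delta}|g|^p\,d\sigma\Big)^{1/p}\Big(\int_{2\Delta}(k^Y)^q\,d\sigma\Big)^{1/q}\lesssim\Big(\fint_{2\Delta}|g|^p\,d\sigma\Big)^{1/p}\lesssim\big(\m(|g|^p)(x)\big)^{1/p},
\]
using $\sigma(2\Delta)\approx\sigma(\Delta)$ and the definition of the (non-centered) maximal operator. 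For the far annuli, I would use the decay of harmonic measure away from $\Delta$: by Lemma \ref{Bourgainhm} and an iteration (or directly from the weak-$RH_q$ estimate on dyadic dilates, which gives $\hm_L^Y(2^{j+1}\Delta)/\sigma(2^j\Delta)\lesssim 2^{-j\alpha}\sigma(\Delta)^{-1}$ for some $\alpha>0$ — this is where one exploits that $Y$ is deep inside and uses the ADR doubling of $\sigma$ together with a Hölder/weak-$RH_q$ bound on each annulus), one obtains a summable geometric-type series
\[
\sum_{j\ge1}\Big|\int_{2^{j+1}\Delta\setminus 2^j\Delta}g\,k^Y\,d\sigma\Big|\ \lesssim\ \sum_{j\ge1}2^{-j\alpha/p'}\big(\m(|g|^p)(x)\big)^{1/p}\ \lesssim\ \big(\m(|g|^p)(x)\big)^{1/p}.
\]
Taking the supremum over $Y\in\U(x)$ yields \eqref{eq4.3}.

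The main obstacle I anticipate is the far-field estimate: the hypothesis \eqref{eq1.wRH} is only a \emph{weak} reverse Hölder inequality (the average of $k^Y$ on the right is over $2\Delta$, not $\Delta$), and harmonic measure here need not be doubling, so one cannot simply cite standard $A_\infty$ machinery. The remedy is that the decay of $\hm_L^Y(2^j\Delta)$ in $j$ is governed not by doubling of $\hm$ but by the Hölder decay estimate for solutions vanishing on the boundary (Corollary \ref{cor2.4}) applied to $u=\hm_L^\bullet(\Delta)$ — equivalently one controls $k^Y$ on each far annulus by the weak-$RH_q$ bound localized to a bounded overlapping cover of that annulus by surface balls of the annulus's own scale (each of radius $\approx 2^j\ell$, each lying in $\Omega\setminus 4B'$ for the relevant $B'$ since $Y$ is at distance $\approx\ell\ll 2^j\ell$), exactly as in the derivation of \eqref{eq1.wRH-2}, and then sums the resulting bounded-overlap contributions against $\m(|g|^p)(x)$. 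One must be a little careful that $g$ has compact support so only finitely many annuli contribute and no convergence issue at infinity arises; with that in hand the argument is routine, and $L^s$-solvability for $s>p$ follows at once from \eqref{eq4.3} and boundedness of $\m$ on $L^{s/p}(\pom)$.
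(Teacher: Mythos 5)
Your route is genuinely different from the paper's and, with some cleanup, it works. The paper does not split the integral $\int g\,k^Y\,d\sigma$ into dyadic annuli; instead it splits the \emph{data} via a partition of unity, $g=\sum_k g_k$ with $\supp g_k$ in the $k$-th annulus, solves separately to get $u_k$, estimates each $u_k$ at Whitney points $X_I\in 2^{k-1}B_Y^*$ by weak-$RH_q$ plus the crude bound $\hm^{X_I}(2\Delta_i)\le 1$, and then obtains the geometric decay $2^{-k\alpha}$ by applying Corollary \ref{cor2.4} to the solution $u_k$ (which vanishes on $2^{k-1}B_Y^*\cap\pom$) to pass from the average of $u_k$ over $2^{k-1}B_Y^*\cap\Omega$ down to $u_k(Y)$. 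You instead keep one solution $u$, decompose the Poisson-kernel integral over annuli $2^{j+1}\Delta\setminus 2^j\Delta$, apply H\"older with the weak-$RH_q$ bound on balls of the annulus's own scale, and pull the decay factor out of the $\hm^Y(2\Delta_i)$ term in the weak reverse H\"older estimate. That is sound, and in some ways cleaner (no partition of unity, no averages of tail solutions to control); the paper's version has the advantage of never needing a quantitative upper bound on $\hm^Y$ of far sets, only on solutions it has already built. Two points need fixing, though. First, the decay is \emph{not} a consequence of ``Lemma \ref{Bourgainhm} and an iteration'' nor does it come ``directly from the weak-$RH_q$ estimate on dyadic dilates'': Bourgain's lemma gives a lower bound, and weak-$RH_q$ by itself, with $\hm^Y(2\Delta_i)\le 1$, only reproduces the crude estimate with no decay in $j$. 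The decay comes, as you say at the end, from Corollary \ref{cor2.4}, but applied to the function $Z\mapsto\hm^Z(\pom\setminus 2^{j-1}\Delta)$ (which vanishes continuously on $2^{j-2}\Delta$), not to ``$\hm_L^\bullet(\Delta)$'' (which does not vanish near $\Delta$); this gives $\hm^Y(\pom\setminus 2^{j-1}\Delta)\lesssim 2^{-j\alpha}$ since $\delta(Y)\approx\ell\ll 2^j\ell$. Second, the displayed decay bound $\hm_L^Y(2^{j+1}\Delta)/\sigma(2^j\Delta)\lesssim 2^{-j\alpha}\sigma(\Delta)^{-1}$ is miswritten (as stated it is \emph{weaker} than the trivial bound); the usable statement is simply $\hm_L^Y(\pom\setminus 2^{j-1}\Delta)\lesssim 2^{-j\alpha}$, which, fed into the weak-$RH_q$ estimate on balls of radius $\approx 2^j\ell$ and then H\"older, yields a summable series, and the exponent in the tail should come out as $2^{-j\alpha}$ rather than $2^{-j\alpha/p'}$.
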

\begin{remark}\label{r4.7a}
As is well known, the weak-$RH_q$ estimate \eqref{eq1.wRH} is self-improving, i.e.,
weak-$RH_q$ implies weak-$RH_{q+\eps}$, for some $\eps>0$, thus, in particular,
one may self-improve
\eqref{eq4.4} to the case $s=p$.  We also remark that our definition of $L^p$-solvability
of the Dirichlet problem entails only a non-tangential maximal function estimate, and does not address
the issue of non-tangential convergence a.e. to the data.  The latter would seem to require
that the Whitney boxes in the definition of $\W(x)$ (see \eqref{eq4.cone1}) 
exist at infinitely many scales, for a.e. $x\in \pom$; e.g.,
the interior Corkscrew condition would be more than enough to guarantee this property.
\end{remark}
\begin{proof}[Proof of Proposition \ref{prop4.2}]  
Splitting the data $g$ into its positive and negative parts, we may suppose without loss of generality that
$g\geq 0$, hence also $u\geq 0$.
Let $x\in\pom$, 
fix $Y\in \U(x)$,  and let $\hat{y}\in \pom$ be a touching point, i.e., 
$|Y-\hat{y}|=\delta(Y)$.  Set 
$$\dsy:= \Delta\big(\hat{y},1000 \delta(Y)\big)\,,\qquad B^*_Y:= B\big(\hat{y},1000 \delta(Y)\big)\,,$$
and note that $x\in \dsy$.
Define a continuous partition of unity 
$\sum_{k\geq 0}\vp_k \equiv 1$ on $\pom$, such that $0\leq \vp_k\leq 1$ for all $k\geq 0$,
with
\begin{equation}\label{eq4.5a}
\supp(\vp_0) \subset 4\dsy\,, \quad %\,\,\eta_0\equiv 1 \,\,{\rm on}\,\, 2\Delta_Y \,,$$
\supp(\vp_k) \subset R_k:= 2^{k+2}\dsy\setminus 2^{k}\dsy\,,\,\, k\geq 1\,,
\end{equation}
set $g_k:= g\vp_k$, and let $u_k$ be the solution of the Dirichlet problem with data $g_k$.
Thus, $u =\sum_{k\geq 0} u_k$ in $\Omega $.
By construction, $Y$ is a Corkscrew point for $4\dsy$, and
$x\in 4\dsy$, hence
$$u_0(Y) \leq \int_{\pom} g_0 \,  k^Y\, d\sigma\, \lesssim\,
\left(\fint_{4\dsy} g^p\,d\sigma\right)^{1/p} \,\lesssim \, \left( \m\big(g^p\big)(x)\right)^{1/p}\,,$$
where in the next to last step we have used \eqref{eq1.wRH-2}.

%$$\supp(\eta_k) \subset 2^{k+2}\Delta_Y\setminus \,,\,\,\eta_0\equiv 1 \,\,{\rm on}\,\, 2\Delta_Y \,,$$

Next, we claim that
\begin{equation}\label{eq4.6a} u_k(Y) \lesssim \, 2^{-k\alpha}  \left( \m\big(g^p\big)(x)\right)^{1/p}\,.
\end{equation}
Given this claim, we may sum in $k$ to obtain \eqref{eq4.3}.  Thus, it suffices to verify \eqref{eq4.6a}.
To this end, we set
$$\W_k:= \big\{I\in\W:\, I\,\, {\rm meets}\,\, 2^{k-1}B_Y^*\big\}\,,
%\qquad \W^j_k:= \big\{I\in\W_k: \,\ell(I) = 2^{-j}\}\,,
$$
and for each $I\in\W_k$, we fix a point $X_I\in I\cap 2^{k-1}B_Y^*$, and we 
define
$$\Delta_I:= \Delta_{X_I}\,,$$
as in \eqref{eq3.1}, with $X=X_I$.   We now choose a collection of balls $\{B_i\}_{1\leq i\leq N}$,
with $N$ depending only on $n$ and ADR, 
and corresponding surface ball $\Delta_i:= B_i\cap\pom$,
such that $R_k\subset \cup_{i=1}^N \Delta_i$, and such that for each $i=1,2,...,N$,
$$ r_{B_i}\approx 2^k r\quad {\rm and}\quad 2^{k-1}B_Y^*\subset 
\ree\setminus 4B_i\,.$$
Then by definition of $R_k$ (see \eqref{eq4.5a}), and the ADR property,
\begin{multline}\label{eq4.7a}
u_k(X_I) \leq\, \int_{R_k}g \, d\hm^{X_I} \,\lesssim\,
(2^k r)^n \left(\fint_{2^{k+2}\dsy} g^p\,d\sigma\right)^{1/p}\left(\sum_{i=1}^N \fint_{\Delta_i}
\left(k^{X_I}\right)^q\, d\sigma \right)^{1/q}  \\[4pt]
\lesssim\, \left(\fint_{2^{k+2}\dsy} g^p\,d\sigma\right)^{1/p}\,
\lesssim \, \left( \m\big(g^p\big)(x)\right)^{1/p}\,,
\end{multline}
where in the next-to-last step we have used the weak-$RH_q$ estimate 
 \eqref{eq1.wRH} in each $\Delta_i$, along with the crude bound $\hm^{X_I}(2\Delta_i) \leq 1$, 
 and the fact that each $\Delta_i$ has radius $r_{\Delta_i}\approx 2^k r$.

Next, by Corollary \ref{cor2.4},
\begin{multline*}
u_k(Y)  \lesssim\, 2^{-k\alpha}\,\, 
\frac1{|2^{k-1}B^*_Y|}\iint_{2^{k-1}B^*_Y\cap\Omega}u_k(Z)\, dZ\\[4pt]
 \lesssim \, 2^{-k\alpha} \frac{1}{(2^k r)^{n+1}}
\sum_{I\in\W_k} \iint_I u_k(Z)\, dZ\,
 \approx \,
2^{-k\alpha} \frac{1}{(2^k r)^{n+1}}
\sum_{I\in\W_k} |I| \, u_k(X_I) \\[4pt]
 \lesssim \, 2^{-k\alpha}\left( \m\big(g^p\big)(x)\right)^{1/p}\,,
\end{multline*}
where in the last two lines we have used Harnack's inequality in the Whitney box $I$,
and  then \eqref{eq4.7a}, and %the definitions of $\W^j_k$ and of $\Delta_I$.
%For each fixed $j$, the surface balls $\Delta_I,\, I\in \W^j_k$, have bounded overlaps, and 
the fact that the Whitney boxes in $\W_k$
are non-overlapping and are all contained in a Euclidean ball of radius $\approx 2^k r$.
%\eqref{eq4.6a}, by the ADR property of $\pom$.  
\end{proof}

With Proposition \ref{prop4.2} in hand, we turn to the proof of BMO-solvability.
Our approach here follows that in \cite{DKP}, which in turn is based on that of \cite{FN}.
We now suppose that the Corkscrew condition holds in $\Omega$, and that $L$ is the Laplacian.  
In this case, by the result of
\cite{HM-IV} (see also \cite{HLMN} and \cite{MT}), 
the weak-$A_\infty$ condition for harmonic measure implies that $\pom$ is uniformly rectifiable,
and thus, by a result of \cite{HMM2}, we have the following
square function/non-tangential maximal function
estimate:  for $u$ harmonic in $\Omega$,
\begin{equation}\label{eq4.10a}
\int_{\pom} \big(\s u\big)^p\, d\sigma \leq C_p \int_{\pom} \big(N_{*} u\big)^p\, d\sigma\,,
\end{equation}
where $C_p$ depends also on $n$, and the UR constants for $\pom$ (and thus on the ADR,
Corkscrew and weak-$A_\infty$ constants), and where 
$$\s u(x):= \left(\iint_{\U(x)} |\nabla u(Y)|^2\, \delta(Y)^{1-n}\, dY\right)^{1/2},$$
and $\U(x)$ and $N_*u$ were defined in \eqref{eq4.cone} and \eqref{eq4.nt}.

Now consider a
ball $B=B(x,r)$, with $x\in \pom$, and $0<r<\diam(\pom)$, and corresponding
surface ball $\Delta = B\cap \pom$.
Let
$f$ be continuous with compact support on $\pom$,  %As usual, let $\Delta = B\cap \pom$,
and set $h:= f- c_\Delta$, where $c_{\Delta}:= \fint_{40\Delta} f$.
We construct a smooth partition of unity $\sum_{k\geq 0} \vp_k \equiv 1$ on $\pom$ as before,
but now with $10\Delta$ in place of $\Delta_Y^*$.   Set
$h_k:= h\vp_k$, and let $u_k$ be the solution to the Dirichlet problem with data $h_k$.
Set
$$  \W_B:= \big\{I\in\W:\, I\,\, {\rm meets}\,\, B\big \}\,,\qquad 
\W_B^j:= \big\{I\in\W_B:\, 
\ell(I) = 2^{-j}\big\}\,,$$
and for each $I\in \W_B$, fix a point $X_I\in I\cap B$.  As above, let $\Delta_I:= \Delta_{X_I}$
be defined as in \eqref{eq3.1}, and note that  by construction,
 \begin{equation*}%\label{eq}
 z\in \Delta_I \quad \implies\quad I\in \W(z)\,,
 \end{equation*}
 where $\W(z)$ is defined in \eqref{eq4.cone1}.
Consequently, given $z\in \pom$,
 \begin{equation}\label{eq4.13}
  \sum_{I:\, z\in \Delta_I } \iint_I |\nabla u_0(Y)|^2 \,\delta(Y)^{1-n}\, dY  \,\lesssim \,
 \big( Su(z)\big)^2\,.
 \end{equation} 
 Let us note also that 
 \begin{equation}\label{eq4.14}
 I\in \W_B \,\, \implies\,\, \Delta_I \subset \Delta(x,Cr)=:\Delta^*\,,
 \end{equation}
 for $C$ chosen large enough.
We then have
\begin{equation*}\begin{split}
\iint_{B\cap\Omega}|\nabla u_0(Y)|^2\,\delta(Y)\, dY \,&\lesssim\,
\sum_{I\in\W_B}  \iint_I |\nabla u_0(Y)|^2 \,\delta(Y)\, dY \\
& \approx \,   \sum_{I\in\W_B} \fint_{\Delta_I} 
\iint_I |\nabla u_0(Y)|^2 \,\delta(Y)\, dY\, d\sigma  \\
& \lesssim \, \int_{\Delta^*} \big( \s u_0(z)\big)^2 \, d\sigma(z) \\ & \lesssim\,
\sigma(\Delta)^{(p-2)/p}\,\left(\int_{\Delta^*} \big( \s u_0(z)\big)^p \, d\sigma(z)\right)^{2/p}\,,
\end{split}
\end{equation*}
where in the last two steps we have used the ADR property and  \eqref{eq4.13},
and then ADR again.  Therefore, by \eqref{eq4.10a}, and then  Remark \ref{r4.7a},
and the definition of $u_0$,
$$\frac1{\sigma(\Delta)}\iint_{B\cap\Omega}|\nabla u_0(Y)|^2\,\delta(Y)\, dY \,\lesssim\,
\sigma(\Delta)^{-2/p}  \left(\int_{40\Delta}|f-c_\Delta|^p\right)^{2/p} \lesssim \|f\|^2_{BMO(\pom)}\,.$$

For $k\geq 1$, we set $g_k:= |h_k| = |f-c_\Delta|\vp_k$, and let $v_k$ be the solution of the Dirichlet
problem with data $g_k$.  Thus, $|u_k| \leq v_k$.    For $k\geq 0$, set 
$$\widetilde{B}:= 40 B =B(x,40r)\,,\quad B_k:= 2^{k}\, \widetilde{B}\,,\quad \Delta_k := B_k\cap\pom\,,$$ 
and let $\Delta_k^*$ be a sufficiently large concentric fattening of $\Delta_k$.  Given $I\in\W$,
define $I^*=((1+\tau)I$, with $\tau$ chosen small enough that 
$\dist(I^*,\pom) \approx \dist(I,\pom)\approx \diam(I)$.
Then for $Y\in I^*$, with $I\in \W_B^j$, by Corollary \ref{cor2.4},
\begin{multline*}
v_k(Y)\,\lesssim \, \left(\frac{\ell(I)}{2^kr}\right)^{\alpha} \frac1{|B_{k-1}|}
\iint_{B_{k-1}\cap\Omega}v_k\lesssim \, 
\big(2^j2^k r\big)^{-\alpha} \fint_{\Delta_k^*}N_*v_k\, d\sigma \\
\lesssim \,\big(2^j2^k r\big)^{-\alpha} \left(\fint_{\Delta_k^*}\big(N_*v_k\big)^p\, d\sigma\right)^{1/p}
\lesssim \, \big(2^j2^k r\big)^{-\alpha}  \left(\fint_{\Delta_{k+2}}|f-c_\Delta|^p\, d\sigma\right)^{1/p}\\
\lesssim  \, k \,\big(2^j2^k r\big)^{-\alpha}\, \|f\|_{BMO(\pom)}\,,
\end{multline*}
where in the last two steps we have used Remark \ref{r4.7a}, and a well known telescoping argument.
Consequently, setting $\Delta^*=\Delta(x,Cr)$ as in \eqref{eq4.14},
\begin{equation*}\begin{split}
\iint_{B\cap\Omega}|\nabla u_k(Y)|^2\,\delta(Y)\, dY \,&\lesssim\,
 \sum_{I\in\W_B} \ell(I) \iint_I |\nabla u_k(Y)|^2 \, dY \\
&\lesssim \,   \sum_{I\in\W_B} \ell(I)^{-1}
\iint_{I^*} | u_k(Y)|^2 \, dY\, d\sigma  \\ 
&\lesssim\, k^2\,2^{-2k\alpha}\, \|f\|^2_{BMO(\pom)}
\sum_{j: \, 2^{-j}\lesssim r} \,\big(2^j r\big)^{-2\alpha} \sum_{I\in\W_B^j} 
\sigma(\Delta_I)\\
&\lesssim\, k^2\,2^{-2k\alpha}\, \|f\|^2_{BMO(\pom)}  \, \sigma(\Delta^*)\,,
\end{split}
\end{equation*}
since for each fixed $j$, the surface balls $\Delta_I$ with $I\in \W_B^j$ have bounded overlaps,
and are all contained in $\Delta^*$.  Dividing by $\sigma(\Delta^*)$ and using ADR,
we may then sum in $k$ to obtain \eqref{eq1.2}, thus concluding the proof of Theorem \ref{t2}.

\noindent {\bf Acknowledgements}.  We are grateful to Simon Bortz for a suggestion
which has simplified one of our arguments in Section \ref{s3}.

\end{document}